\newcounter{dummy}
\newcommand\myitem[1][]{\item[#1]\refstepcounter{dummy}\def\@currentlabel{#1}}
\newsavebox{\measure@tikzpicture}
	\def\tikz@width{#1}%
\DeclareSymbolFontAlphabet{\mathbb}{AMSb}
\newcommand{\thistheoremname}{}
\newtheorem*{genericthm*}{\thistheoremname}
\newenvironment{namedthm*}[1]
{\renewcommand{\thistheoremname}{#1}%
	\begin{genericthm*}}
	{\end{genericthm*}}
\newcommand{\Bairespace}[1][]{
	\ifthenelse{\equal{#1}{}}{\functions{\N}{\N}}{\functions{#1}{\N}}
}
\newcommand{\bbL}{\mathbb{L}}
\newcommand{\bbX}{\mathbb{X}}
\newcommand{\Cantorspace}[1][]{
	\ifthenelse{\equal{#1}{}}{\functions{\N}{2}}{\functions{#1}{2}}
}
\newcommandx{\concatenation}[2][1 = undefined, 2 = undefined]{
	\ifthenelse{\equal{#1}{undefined}}{{}\smallfrown}{
		\ifthenelse{\equal{#2}{undefined}}{\bigoplus #1}{\bigoplus_{#1} #2}
	}
}
\newcommandx{\functions}[3][3 =]{
	\ifthenelse{\equal{#3}{}}{#2^{#1}}{#2_{#3}^{#1}}
}
\newcommand{\Gzero}[1][]{
	\ifthenelse{\equal{#1}{}}
	{\mathbb{G}_0}
	{\mathbb{G}_{0,n}}
}
\newcommandx{\Hzero}[2][2 = undefined]{
	\ifthenelse{\equal{#2}{undefined}}
	{\mathbb{H}_{#1}}
	{\mathbb{H}_{#1, #2}}
}
\newcommandx{\intersection}[2][1 =, 2 =]{
	\ifthenelse{\equal{#1}{}}{\cap}{
		\ifthenelse{\equal{#2}{}}{\bigcap #1}{{\bigcap_{#1} #2}}
	}
}
\newcommand{\Lzero}[1][]{\ifthenelse{\equal{#1}{}}{\bbL_0}{L_{0, #1}}}
\newcommand{\Lzerospace}[1][]{\ifthenelse{\equal{#1}{}}{\bbX_0}{X_{0, #1}}}
\newcommand{\modulo}[1]{\ (\text{mod } 2)}
\newcommand{\N}{\mathbb{N}}
\newcommandx{\product}[2][1 =, 2 =]{
	\ifthenelse{\equal{#1}{}}{\times}{
		\ifthenelse{\equal{#2}{}}{\prod #1}{{\prod_{#1} #2}}
	}
}
\newcommandx{\sequence}[2][2 = undefined]{
	\ifthenelse{\equal{#2}{undefined}}{(#1)}{
		(#1)_{#2}
	}
}
\newcommandx{\set}[2][2 = undefined]{
	\ifthenelse{\equal{#2}{undefined}}{\{ #1 \}}{
		\{ #1 \suchthat #2 \}
	}
}
\newcommandx{\sets}[3][3 =]{
	\ifthenelse{\equal{#3}{}}{[#2]^{#1}}{[#2]^{#1}_{#3}}
}
\newcommand{\suchthat}{\mid}
\renewcommand{\restriction}[2]{#1 \upharpoonright #2}
\newcommandx{\union}[2][1 =, 2 =]{
	\ifthenelse{\equal{#1}{}}{\cup}{
		\ifthenelse{\equal{#2}{}}{\bigcup #1}{{\bigcup_{#1} #2}}
	}
}
\newtheorem{theorem}{Theorem}[section]
\newtheorem{lemma}[theorem]{Lemma}
\newtheorem{corollary}[theorem]{Corollary}
\newtheorem{proposition}[theorem]{Proposition}
\newtheorem{problem}[theorem]{Problem}
\newtheorem{definition}[theorem]{Definition}
\numberwithin{equation}{section}
\newcommand{\om}{\mathbb{N}}
\newcommand{\omm}{[\mathbb{N}]^\mathbb{N}}
\newcommand{\oom}{\mathbb{N}^\mathbb{N}}
\newcommand{\bd}{\begin{definition}}
	\newcommand{\ed}{\end{definition}}
\DeclareMathOperator{\proj}{proj}
\DeclareMathOperator{\ran}{ran}
\DeclareMathOperator{\dist}{dist}
\DeclareMathOperator{\didistance}{didist}
\newcommand{\mc}{\mathcal}
\newcommand{\mb}{\mathbf}
\newcommand{\bs}{\mathbf{\Sigma}^1_1}
\newcommand{\bbg}{\mathbf{\Gamma}}
\newcommand{\bbo}{\mathbf{\Delta}^1_1}
\newcommand{\bp}{\mathbf{\Pi}^1_1}
\newcommand{\distance}[3]{\ifthenelse{\isempty{#3}}{\dist(#1,#2)}{\dist^{#3}(#1,#2)}}
\newcommand{\didist}[3]{\ifthenelse{\isempty{#3}}{\didistance(#1,#2)}{\didistance^{#3}(#1,#2)}}
\newcommand{\digraph}[3]{\ifthenelse{\equal{#1}{b}}{\mathbb{#2}_{#3}}
	{{#2}_{#3}}}
\newcommand{\linegraph}[3]{\ifthenelse{\equal{#1}{b}}{\mathbb{#2}_{#3}}
	{#2_{#3}}}
\newcommand{\underlyingspace}[3]{\ifthenelse{\equal{#1}{b}}{\mathbb{#2}_{#3}}
	{#2_{#3}}}
\newcommand{\distanceset}[2]{\ifthenelse{\isempty{#2}}{D(#1)}{D^{#2}(#1)}}
\newcommand{\concatt}{%
	\mathbin{\raisebox{1ex}{\scalebox{.7}{$\frown$}}}%
}
\pgfplotsset{soldot/.style={color=blue,only marks,mark=*}}
\definecolor{pastelred}{rgb}{1.0, 0.41, 0.38}
\definecolor{pastelblue}{rgb}{0.52, 0.63, 0.94}
\definecolor{pastelyellow}{rgb}{0.99, 0.99, 0.59}
\definecolor{pastelgreen}{rgb}{0.47, 0.87, 0.47}
\definecolor{pastelorange}{rgb}{1.0, 0.7, 0.28}
\definecolor{shadecolor}{named}{lightgray}
\newif\ifold
\title{Hyper-hyperfiniteness and complexity}
\author{Joshua Frisch}
\address{University of California San Diego,
Department of Mathematics,
9500 Gilman Drive,
La Jolla,
CA 92093}
\author{Forte Shinko}
\address{University of California Berkeley,
Department of Mathematics,
970 Evans Hall,
Berkeley,
CA 94720}
\author{Zolt\'an Vidny\'anszky}
\address{E\"otv\"os Lor\'and University, Institute of Mathematics, P\'azm\'any P\'eter stny. 1/C, 1117 Budapest, Hungary}
\thanks{The authors are grateful to support provided by the Erd\H os Center, which made this research possible. The third author was supported by by Hungarian Academy of Sciences Momentum
Grant no. 2022-58 and National Research, Development and Innovation Office (NKFIH) grants
no. 113047, 129211}
\begin{document}

\begin{abstract}
    We show that if there exists a countable Borel equivalence relation which is hyper-hyperfinite but not hyperfinite then the complexity of hyperfinite countable Borel equivalence relations is as high as possible, namely,  $\mathbf{\Sigma}^1_2$-complete.
\end{abstract}

\maketitle
The theory of countable Borel equivalence relations (CBERs) is an active field of research, which became one of the most prominent directions in descriptive set theory, for a comprehensive survey see \cite{kechris2024theory}. Recall that a CBER is \emph{hyperfinite} if it can be expressed as an increasing union of CBERs with finite classes. Probably the most investigated general goal of this area concerns the understanding of hyperfinite CBERs. Even though, the notion of ``understanding" is not a well-defined one, there are several conjectured statements that would provide significant insight. One such formal statement is the following:

\begin{center}
Does the collection of hyperfinite CBERs form a $\mathbf{\Pi}^1_1$ set (see Section \ref{s:prel})?
\end{center}
A seemingly unrelated major open problem in the theory of CBERs is whether hyper-hyperfinite equivalence relations are hyperfinite:

\begin{center}
Assume that $F_0 \subseteq F_1 \subseteq \dots$ is a sequence of hyperfinite CBERs and $E=\bigcup_{i \in \N} F_i$. Is $E$ hyperfinite?
\end{center}
While these two statements are sometimes considered ``equally hard", so far, no formal implication has been shown to hold between them. We give an easy proof the following, somewhat surprising implication.

\begin{theorem}
\label{t:main} If there exists a CBER $E$ which is hyper-hyperfinite but not hyperfinite then the hyperfinite equivalence relations form a $\mathbf{\Sigma}^1_2$-complete set. 
\end{theorem}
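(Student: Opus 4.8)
The plan is to verify the two halves of "$\mathbf{\Sigma}^1_2$-complete" separately: that hyperfiniteness is \emph{always} a $\mathbf{\Sigma}^1_2$ property, and that — using the hypothesized counterexample — it is $\mathbf{\Sigma}^1_2$-hard. The upper bound is routine. Recall that a CBER is hyperfinite exactly when it is the orbit equivalence relation of some Borel action of $\mathbb{Z}$; in the coding of CBERs from Section~\ref{s:prel}, ``$a$ codes a Borel $\mathbb{Z}$-action'' is $\mathbf{\Pi}^1_1$ and ``the orbit equivalence relation of $a$ is $E$'' is Borel in the relevant parameters, so ``$E$ is hyperfinite'' has the form $\exists a\,(\mathbf{\Pi}^1_1)$ and is therefore $\mathbf{\Sigma}^1_2$. (The same quantifier count shows hyper-hyperfiniteness is $\mathbf{\Sigma}^1_2$, which for the purposes of the theorem is subsumed by the bound for hyperfiniteness.)

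For hardness, fix the counterexample $E^\ast=\bigcup_n F_n$ with each $F_n$ a hyperfinite CBER on a Polish space $X^\ast$, $F_n\subseteq F_{n+1}$, and $E^\ast$ not hyperfinite. I would reduce to the set of hyperfinite CBERs the $\mathbf{\Sigma}^1_2$-complete set
\[
\mathscr{T}=\bigl\{\,T\text{ a tree on }\mathbb{N}\times\mathbb{N}\ :\ \exists x\in\mathbb{N}^{\mathbb{N}}\ (T_x\text{ is wellfounded})\,\bigr\}\qquad(\text{equivalently, }p[T]\neq\mathbb{N}^{\mathbb{N}}),
\]
its $\mathbf{\Sigma}^1_2$-completeness being routine: write a given $\mathbf{\Sigma}^1_2$ set as $\{x:\exists y\,(S_{x,y}\text{ wellfounded})\}$ for a continuous assignment $(x,y)\mapsto S_{x,y}$ into trees, and bundle the trees $S_{x,y}$ into a single tree $T^{(x)}$ on $\mathbb{N}\times\mathbb{N}$ with $(T^{(x)})_y=S_{x,y}$. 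So it suffices to construct a Borel map $T\mapsto E_T$ into codes for CBERs with $E_T$ hyperfinite if and only if $T\in\mathscr{T}$. As a warm-up (and a sanity check on the polarity), note that for a tree $U$ on $\mathbb{N}$ the Borel assignment ``put a copy of $E^\ast$ over every infinite branch of $U$'' — on $[U]\times X^\ast$ declare two points equivalent iff they lie over the same branch and their $X^\ast$-coordinates are $E^\ast$-related, extend by equality elsewhere — is always a CBER and is hyperfinite iff $[U]=\varnothing$, i.e.\ iff $U$ is wellfounded; since wellfoundedness is $\mathbf{\Pi}^1_1$-complete this already gives $\mathbf{\Pi}^1_1$-hardness, and the real task is to upgrade the ``$\forall$'' to an ``$\exists$''.

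The construction of $E_T$ will again feed copies of $E^\ast$ into the relation along the branches of the sections $T_x$, but interleaved with the \emph{finite} nodes of those sections, which carry only hyperfinite data: the idea is that along each infinite branch of $T_x$ one sees the increasing union $F_0\subseteq F_1\subseteq\cdots$ building a copy of $E^\ast$, whereas a \emph{wellfounded} section $T_{x_0}$ — one with no infinite branch but equipped with a rank function $\rho$ — supplies a well-ordered \emph{schedule} $\rho$ along which all of the data of $E_T$ can be exhausted by \emph{finite} subequivalence relations. Concretely $E_T$ will be arranged as $\bigcup_n G_n$ with each $G_n$ hyperfinite, so $E_T$ is always hyper-hyperfinite — which is exactly what we want, since the content of the theorem is that even among hyper-hyperfinite CBERs hyperfiniteness is $\mathbf{\Sigma}^1_2$-complete. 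Two directions should then be easy: if \emph{every} section $T_x$ has an infinite branch, the copies of $E^\ast$ produce a Borel restriction of $E_T$ isomorphic to $E^\ast$, so $E^\ast\le_B E_T$ and $E_T$ is not hyperfinite; and by construction $E_T$ is hyper-hyperfinite regardless of $T$.

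The hard part — and the point on which the whole argument turns — is the converse: to arrange the gluing between the (uncountably many) sections $T_x$ so that the existence of a \emph{single} wellfounded section $T_{x_0}$ forces $E_T$ to be hyperfinite, its rank function being used as a global schedule that reorganizes \emph{all} of $E_T$, including the copies of $E^\ast$ contributed by the remaining, ill-founded sections, into an increasing sequence of finite equivalence relations exhausting $E_T$. Getting this interaction right — so that the reduction genuinely detects ``$\exists x\,(T_x$ wellfounded$)$'' and not merely ``$\forall x$'', which would only reprove $\mathbf{\Pi}^1_1$-hardness — is where the work and the risk lie; granting it, $E_T$ is hyperfinite precisely when $T\in\mathscr{T}$, and together with the $\mathbf{\Sigma}^1_2$ upper bound this yields that the hyperfinite CBERs form a $\mathbf{\Sigma}^1_2$-complete set.
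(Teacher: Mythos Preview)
Your upper bound is fine. The hardness argument, however, has a genuine gap --- precisely the one you flag with ``granting it'' --- and it is not a technicality. Your two desiderata are in direct conflict: you want (i) that whenever every section $T_x$ is ill-founded, $E_T$ contains a Borel restriction isomorphic to $E^\ast$, and (ii) that a single wellfounded section $T_{x_0}$ supplies a rank that ``reorganizes all of $E_T$, including the copies of $E^\ast$ contributed by the remaining, ill-founded sections'' into something hyperfinite. But hyperfiniteness passes to Borel restrictions, so no reorganization of $E_T$ can make a Borel copy of $E^\ast$ sitting inside it hyperfinite. If ill-founded sections really contribute such copies, then one ill-founded section already blocks hyperfiniteness regardless of what the other sections do, and your map $T\mapsto E_T$ detects ``$\forall x\,(T_x$ wellfounded$)$'' --- you are back at $\mathbf{\Pi}^1_1$. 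What is missing is a construction in which the data attached to the various $x$'s genuinely interfere, so that one good $x$ collapses the contribution of all the others; nothing in the proposal indicates how to do this, and it is the entire content of the theorem.

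The paper achieves this interference not via ranks on trees but via the dominating ideal. From $E^\ast=\bigcup_n E_n$ it builds finite CBERs $(F_s)_{s\in\N^{<\N}}$ with $\bigcup_k F_{s\concatt(k)}=E_{|s|+1}$, so that for every $x\in[\N]^\N$ the relation $F_x=\bigcup_n F_{\restriction{x}{n}}$ is an increasing union of finite CBERs, hence hyperfinite. A $K_\sigma$-uniformization argument then produces a Borel $\Phi:E^\ast\to[\N]^\N$ with $e\in F_x$ whenever $\Phi(e)\le^\infty x$. After transporting $E^\ast$ into $[\N]^\N$, this says exactly that $E^\ast$ is non-hyperfinite globally but \emph{uniformly} hyperfinite on every non-dominating set $\mathcal{D}_x=\{y:y\le^\infty x\}$. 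The reduction then runs through this dichotomy (for closed $F$ projecting to an analytic $A$, the set $B_s=\{y:\forall x\le^* y\ (x\notin F_s)\}$ is all of $[\N]^\N$ when $s\notin A$ and is contained in some $\mathcal{D}_{x'}$ when $s\in A$), and the upgrade from this $\mathbf{\Sigma}^1_1$-level dichotomy to full $\mathbf{\Sigma}^1_2$-hardness is delegated to a general black box (Theorem~\ref{t:maincomplex}) rather than a direct reduction from $\{T:\exists x\,(T_x\text{ wellfounded})\}$.
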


Roughly speaking, a subset of the reals is $\mathbf{\Sigma}^1_2$-complete if it can be defined using an existential and a universal quantifier over the reals, but among such sets it has the maximal complexity. In particular, such a complexity result excludes most of the natural ways of characterizing hyperfinite CBERs. Thus, already a negative answer to the increasing union conjecture would imply that understanding CBERs is in some sense an intractable problem.

\section{Preliminaries on coding, complexity, and equivalence relations}
\label{s:prel}

Open, closed, Borel, analytic, co-analytic, and projections of co-analytic sets are denoted by $\mb{\Sigma}^0_1, \mb{\Pi}^0_1, \mb{\Delta}^1_1, \mb{\Sigma}^1_1, \mb{\Pi}^1_1, \mb{\Sigma}^1_2$ (see \cite{kechris2024theory} for the basic results about these classes).

First we need to fix an encoding of Borel sets. Let $\mb{BC}(X)$ be a set of Borel codes and sets $\mb{A}(X)$ and $\mb{C}(X)$, analytic (denoted by $\mb{\Sigma}^1_1(X)$) and coanalytic, with the properties summarized below:

\begin{proposition} (see \cite[3.H]{moschovakis2009descriptive})
	\label{f:prel}
	\begin{itemize}
		\item $\mb{BC}(X) \in \bp(\oom)$, $\mb{A}(X) \in \bs(\oom \times X)$, $\mb{C}(X) \in \bp(\oom \times X)$,
		\item for $c\in \mb{BC}(X)$ and $x \in X$ we have $(c,x) \in \mb{A}(X) \iff (c,x) \in \mb{C}(X)$,
		\item if $P$ is a Polish space and $B \in \bbo(P \times X)$ then there exists a Borel map $f:P \to \oom$ so that $ran(f) \subset \mb{BC}(X)$ and for every $p \in P$ we have $\mathbf{A}(X)_{f(p)}=B_p$.
	\end{itemize}
	
\end{proposition}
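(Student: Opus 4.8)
The plan is to carry out the standard construction of a universal system of Borel codes, so I describe only the architecture and indicate where the content sits. Fix a \Polish space $X$ with a countable basis $(U_n)_{n\in\N}$ and computable pairing bijections $\langle\cdot,\cdot\rangle\colon\N\times\N\to\N$ with their inverses. Interpret $c\in\oom$ as a code by reading $c(0)$: if $c(0)=0$ it codes the basic open set $U_{c(1)}$; if $c(0)=1$ it codes the complement of the set coded by the subcode $c_1\colon n\mapsto c(n+1)$; if $c(0)\ge 2$ it codes the union $\bigcup_k B_{(c)_k}$ of the sets coded by the subcodes $(c)_k\colon n\mapsto c(\langle k,n\rangle+1)$. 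Let $T_c$ be the tree of finite sequences $(c=c^0,c^1,\dots,c^m)$ with each $c^{i+1}$ an immediate subcode of $c^i$, and put $\mb{BC}(X)=\{c : T_c\ \text{is well-founded}\}$. The set of pairs $(c,y)$ for which $y$ codes an infinite branch of $T_c$ is closed, so $\mb{BC}(X)\in\bp(\oom)$.

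For $\mb{A}(X)$ and $\mb{C}(X)$ I would use \emph{verification structures}: a real coding a countably branching labelled tree whose nodes carry one of two tags, say $V$ or $R$, and are each attached to a subcode of $c$ (the attachment specified by a finite path from $c$, so the whole object is a single real), with the root attached to $c$ and tagged $V$, subject to the evident local rules: a $V$-node at a union-code has exactly one child, attached to one of its disjuncts; an $R$-node at a union-code has all the disjuncts as children; a complement-code has a single child, of the opposite tag; a $V$-node at an atomic code $U_k$ is a leaf and requires $x\in U_k$; an $R$-node at $U_k$ is a leaf and requires $x\notin U_k$. A \emph{refutation structure} is the same with the root tagged $R$. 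Put $\mb{A}(X)=\{(c,x) : \text{some } v\in\oom \text{ codes a verification structure for } x\in B_c\}$ and $\mb{C}(X)=\{(c,x) : \text{no } w\in\oom \text{ codes a refutation structure for } x\in B_c\}$. The local rules form a \Borel (in fact very low-level) condition on $(c,x,v)$, and crucially no separate well-foundedness is imposed: this is exactly what makes $\mb{A}(X)$ honestly $\bs$ and $\mb{C}(X)$ honestly $\bp$ rather than a quantifier worse, and it is the one design point I would single out as needing care. For the coherence clause, if $c\in\mb{BC}(X)$ then $T_c$ is well-founded, so any verification or refutation structure injects rank-decreasingly into $T_c$ and is therefore well-founded; an induction on the rank of $T_c$ then gives that exactly one of the following holds: some verification structure exists; some refutation structure exists (existence by recursively assembling the structure witnessing $x\in B_c$ or $x\notin B_c$; exclusivity because a verification and a refutation structure, followed downward in tandem, must collide at an atomic leaf). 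Hence $(c,x)\in\mb{A}(X)\iff(c,x)\in\mb{C}(X)$ for $c\in\mb{BC}(X)$.

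For the universality clause, fix a countable basis $(V_i)$ of $P$ and take the rectangles $V_i\times U_j$ as a basis of $P\times X$. The sets coded by members of $\mb{BC}(P\times X)$ form a $\sigma$-algebra containing the basic open sets, so $B$ has a code $c^{*}\in\mb{BC}(P\times X)$. Recurse along the well-founded subcode relation below $c^{*}$, which has only countably many members, attaching to each subcode $c'$, coding some $B'\subseteq P\times X$, a \Borel map $f_{c'}\colon P\to\mb{BC}(X)$ with $\mb{A}(X)_{f_{c'}(p)}=B'_p$: send a leaf $V_i\times U_j$ to a fixed code for $U_j$ when $p\in V_i$ and to a fixed code for $\emptyset$ otherwise (\Borel in $p$ since $V_i$ is open), and apply the continuous code-level operations of complementation and countable union at the complement and union nodes. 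Borelness survives each of the countably many stages, and $f_{c'}(p)$ is well-founded because its tree embeds into $T_{c^{*}}$; then $f=f_{c^{*}}$ works. Apart from this, the argument is bookkeeping, and the only genuinely delicate choice, as flagged, is setting up the coding so that $\mb{A}(X)$ and $\mb{C}(X)$ land in $\bs$ and $\bp$ yet still agree throughout $\mb{BC}(X)$.
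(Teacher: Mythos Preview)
The paper does not prove this proposition; it is stated as background with a reference to Moschovakis \cite[3.H]{moschovakis2009descriptive}. Your proposal is a correct outline of the standard construction and would serve as a self-contained proof: the coding of \Borel sets via well-founded parse trees gives $\mb{BC}(X)\in\bp$, your verification/refutation semantics (with no well-foundedness imposed on the witnessing structure) correctly places $\mb{A}(X)$ in $\bs$ and $\mb{C}(X)$ in $\bp$ while the well-foundedness of $T_c$ for $c\in\mb{BC}(X)$ forces agreement, and the recursion along the countably many subcodes of a fixed code $c^*$ for $B$ yields the required \Borel section map $f$. One small point of phrasing: the recursion in the third clause is along the well-founded subcode relation rather than through linearly ordered ``stages'', but since there are only countably many subcodes below $c^*$ and each $f_{c'}$ is built from its immediate successors by continuous code-level operations (or a two-piece clopen case split at the leaves), Borelness of each $f_{c'}$, and in particular of $f=f_{c^*}$, follows.
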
 

We will also use the following fact about such codings.

\begin{proposition} (see \cite[Lemma A.3]{hera2024fubini})
	\label{pr:functions} If $X, Y$ are Polish spaces the set \[bgraph(X,Y)=\{c\in  \mb{BC}(X \times Y): \mb{A}(X \times Y)_c \text{ is a graph of a Borel function $X \to Y$}\}\]
	is $\mb{\Pi}^1_1$. 
\end{proposition}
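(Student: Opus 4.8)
The plan is to reduce membership in $bgraph(X,Y)$ to three conditions on the code $c$ and to check that each of them is $\mb{\Pi}^1_1$. Writing $G_c := \mb{A}(X\times Y)_c$, these conditions are: (i) $c \in \mb{BC}(X\times Y)$; (ii) $\forall x\,\forall y\,\forall y'\;\big((c,(x,y))\notin\mb{A}(X\times Y) \vee (c,(x,y'))\notin\mb{A}(X\times Y) \vee y=y'\big)$, i.e.\ all vertical sections of $G_c$ have at most one point; and (iii) $\proj_X G_c = X$. Indeed, (i) guarantees that $G_c = \mb{C}(X\times Y)_c$ is Borel, (ii) says that $G_c$ is the graph of a partial function, (iii) says this partial function is total, and a total function with Borel graph into a Polish space is automatically Borel (its preimages of open sets are injective Borel images, hence Borel). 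One remark in passing: if one only asked for the graph of a partial Borel function, then (i) and (ii) would already suffice, and the argument would stop after the next paragraph.

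Conditions (i) and (ii) are handled directly by Proposition \ref{f:prel}. The first bullet gives $\mb{BC}(X\times Y)\in\mb{\Pi}^1_1$. For (ii): the complement of the $\mb{\Sigma}^1_1$ set $\mb{A}(X\times Y)$ is $\mb{\Pi}^1_1$, the relation $y=y'$ is closed, and $\mb{\Pi}^1_1$ is closed under finite unions and under universal quantification over Polish spaces, so (ii) defines a $\mb{\Pi}^1_1$ set. (Note that the second bullet of Proposition \ref{f:prel} is not needed for this.)

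The real work is condition (iii). Written out as $\forall x\,\exists y\,(c,(x,y))\in\mb{A}(X\times Y)$ it is only $\mb{\Pi}^1_2$, and replacing $\mb{A}$ by $\mb{C}$ does not improve this, so the naive estimate is too weak. The idea is to exploit (ii): once $G_c$ has singleton sections, the Lusin--Souslin theorem guarantees that $\proj_X G_c$ is \emph{Borel}, and what is required is a uniform, code-level version of this. Concretely, the plan is to invoke a uniform (effective) form of the Lusin--Souslin theorem in the shape of a Borel map $\theta\colon\oom\to\oom$ with $\ran(\theta)\subseteq\mb{BC}(X)$ such that $\mb{A}(X)_{\theta(c)} = \proj_X G_c$ whenever all sections of $G_c$ have at most one point; this is classical and belongs to the standard theory of Borel codings (cf.\ \cite{moschovakis2009descriptive} and the discussion in Section \ref{s:prel}). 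Granting such a $\theta$, under (i) and (ii) condition (iii) becomes equivalent to $\mb{A}(X)_{\theta(c)}=X$; and for any $d\in\mb{BC}(X)$ the second bullet of Proposition \ref{f:prel} gives $\mb{A}(X)_d=X \iff \forall x\,(d,x)\in\mb{C}(X)$, which is $\mb{\Pi}^1_1$. Pulling this back along the Borel map $\theta$ keeps it $\mb{\Pi}^1_1$, so (iii) contributes a $\mb{\Pi}^1_1$ condition as well, and intersecting the three conditions yields $bgraph(X,Y)\in\mb{\Pi}^1_1$.

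I expect the main obstacle to be precisely the construction of the Borel map $\theta$: turning the transfinite argument that a Borel set with singleton sections has Borel projection into a genuinely Borel assignment of Borel codes, defined on all of $\oom$ and landing in $\mb{BC}(X)$ even on inputs that fail to have small sections. Everything else is routine bookkeeping with the closure properties of $\mb{\Pi}^1_1$ recorded around Proposition \ref{f:prel}.
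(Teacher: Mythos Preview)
The paper does not give its own proof of this proposition: it is simply quoted from \cite[Lemma A.3]{hera2024fubini}. Your outline is the standard route to such a statement and is correct; the decomposition into (i)--(iii) and the identification of (iii) as the only nontrivial point are exactly right, and the appeal to a uniform (code-level) Lusin--Souslin theorem is the usual way to handle it.

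One small simplification regarding the ``main obstacle'' you flag. You do not need $\theta$ to land in $\mb{BC}(X)$ on inputs that fail (i) or (ii). It is enough that $\theta$ be a total Borel map on $\oom$ with the property that $\mb{A}(X)_{\theta(c)}=\proj_X G_c$ whenever $c$ satisfies (i) and (ii); on other inputs $\theta(c)$ may be arbitrary. Indeed, set $P$ to be the $\mb{\Pi}^1_1$ set of $c$ satisfying (i) and (ii), and set $Q:=\{c:\forall x\,(\theta(c),x)\in\mb{C}(X)\}$, which is $\mb{\Pi}^1_1$ outright. For $c\in P$ one has $\theta(c)\in\mb{BC}(X)$, so the second bullet of Proposition~\ref{f:prel} gives $c\in Q\iff \mb{A}(X)_{\theta(c)}=X\iff \proj_X G_c=X$. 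Hence $bgraph(X,Y)=P\cap Q$ is $\mb{\Pi}^1_1$, and no control of $\theta$ off $P$ is required. The existence of such a $\theta$ is indeed the classical uniform Lusin--Souslin/Lusin--Novikov machinery you cite.
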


Let $\mathbf{\Delta}$ be a family of subsets of Polish spaces. Recall that a subset $A$ of a Polish space $X$ is \emph{$\mathbf{\Delta}$-hard,} if for every $Y$ Polish and $B \in \mathbf{\Delta}(Y)$  there exists a continuous map $f:Y \to X$ with $f^{-1}(A)=B$. A set is \emph{$\mathbf{\Delta}$-complete} if it is $\mathbf{\Delta}$-hard and in $\mathbf{\Delta}$. 
A family $\mc{F}$ of subsets of a Polish space $X$ is said to be \emph{$\mathbf{\Delta}$-hard on $\bbg$}, if there exists a set $B \in \bbg(\om^\om \times X)$ so that the set $\{s \in \om^\om:B_s \in \mc{F}\}$ is $\mathbf{\Delta}$-hard. 

Now we can define what we mean by hyperfinite Borel equivalence relations being $\mathbf{\Sigma}^1_2$-complete.

\begin{definition}
	Let $\mc{C}$ be a collection of Borel objects and $\mb{\Gamma}$ be a family of sets. We say that $\mc{C}$ is $\mb{\Gamma}$ if for any Polish space $X$ we have $\mb{BC}(X)\cap \mc{C}$ is $\mb{\Gamma}$. If for some $X$ the set is $\mb{\Gamma}$-hard, then $\mc{C}$ is said to be $\mb{\Gamma}$-complete.
\end{definition}

In all of our considerations, the class $\mc{C}$ is going to be invariant under Borel isomorphisms of the underlying space, hence we can forget about the existential quantifier in the above definition. 

Let $L$ be a signature consisting of countably many relations $(R_i)_{i}$ of finite or countably infinite arities $(r_i)_{i}$, respectively.
A \emph{Borel (relational) $L$-structure $\mc{G}$} on the space $X$ is a collection of Borel subsets $R^{\mc{G}}_i \subseteq X^{r_i}$. If $\mc{G}$ and $\mc{H}$ are Borel relational structures with the same signature, on spaces $X$ and $Y$, a \emph{Borel homomorphism from $\mc{G}$ to $\mc{H}$} is a Borel map $\phi: X \to Y$ so that $\forall i \ \forall x \ (x \in R^\mc{G}_i \implies \phi(x) \in R^\mc{H}_i)$, where $\phi(x)$ is the sequence obtained by applying $\phi$ to $x$ elementwise. If $B$ is a Borel subset of $X$, the \emph{restriction} of $\mc{G}$ to $B$, in notation $\restriction{\mc{G}}{B}$, is the structure on $B$ with the relations $B^{r_i} \cap R^\mc{G}_i$. 

The upper bound on the complexity is typically immediate by the next statement.

\begin{proposition}
	\label{pr:homo}
	Let $X$ be a Polish space, $L$ be a signature as above, and $\mathcal{H}$ be a Borel structure with signature $L$. Then the set	
	\[S=\{c \in \mb{BC}(X^{\leq \N}): \text{$c$ codes a Borel $L$-structure that admits a Borel homomorphism to $\mc{H}$}\},\]
	is $\mathbf{\Sigma}^1_2$.  
\end{proposition}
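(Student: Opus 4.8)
The plan is to peel off a single existential real quantifier over a \Borel code $d$ for the graph of a homomorphism and then check that everything left over is $\mathbf{\Pi}^1_1$; since applying $\exists d$ to a $\mathbf{\Pi}^1_1$ condition produces a $\mathbf{\Sigma}^1_2$ set, and $\mathbf{\Sigma}^1_2$ is closed under countable intersections, this gives the claimed bound.

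First I would fix the bookkeeping. Given $c\in\mb{BC}(X^{\leq\N})$ coding an $L$-structure $\mc{G}$, write $R^{\mc{G}}_i\subseteq X^{r_i}$ for its $i$-th relation; the assertion $x\in R^{\mc{G}}_i$ is expressed, on the domain of valid codes, by the $\mathbf{\Sigma}^1_1$ condition "$(c,x)$ lies in the $i$-th slice of $\mb{A}(X^{\leq\N})$", which on $\mb{BC}(X^{\leq\N})$ coincides with the $\mathbf{\Pi}^1_1$ condition coming from $\mb{C}(X^{\leq\N})$; and "$c$ codes an $L$-structure" is $\mathbf{\Pi}^1_1$ (it is $c\in\mb{BC}(X^{\leq\N})$ together with the $\mathbf{\Pi}^1_1$ side condition that the coded set be supported on $\bigsqcup_i X^{r_i}$). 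Similarly, by Proposition \ref{pr:functions} the set $bgraph(X,Y)$ of \Borel codes of graphs of \Borel functions $X\to Y$ is $\mathbf{\Pi}^1_1$; and by the third item of Proposition \ref{f:prel}, applied to the \Borel set $\graph(\phi)\subseteq X\times Y$, every \Borel function $\phi\colon X\to Y$ equals $\phi_d$ for some $d\in bgraph(X,Y)$, where $\phi_d$ denotes the function with graph $\mb{A}(X\times Y)_d$. Consequently $\mc{G}$ admits a \Borel homomorphism to $\mc{H}$ if and only if there is $d\in bgraph(X,Y)$ with $\phi_d$ a homomorphism from $\mc{G}$ to $\mc{H}$.

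The core of the argument is that, as a condition on $(c,d)$, the statement "$\phi_d$ is a homomorphism from the structure coded by $c$ to $\mc{H}$" is $\mathbf{\Pi}^1_1$. Writing it out, and unfolding "$y=\phi_d(x)$" coordinatewise using that $d$ codes a graph, it reads
\[
\forall i\ \forall x\in X^{r_i}\ \forall y\in Y^{r_i}\ \Big[\Big((c,x)\in\mb{A}_i\ \wedge\ \forall j<r_i\ (d,(x_j,y_j))\in\mb{A}(X\times Y)\Big)\ \Longrightarrow\ y\in R^{\mc{H}}_i\Big].
\]
The hypothesis of the bracketed implication is a countable conjunction of $\mathbf{\Sigma}^1_1$ conditions, hence $\mathbf{\Sigma}^1_1$; the conclusion $y\in R^{\mc{H}}_i$ is \Borel because $\mc{H}$ is a \Borel structure; so the implication is $\mathbf{\Pi}^1_1$, and prefixing the countable conjunction $\forall i$ and the universal real quantifiers $\forall x,\forall y$ preserves $\mathbf{\Pi}^1_1$. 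Two remarks make this faithful: expressing "$y=\phi_d(x)$" through the $\mathbf{\Sigma}^1_1$ set $\mb{A}(X\times Y)$ rather than the $\mathbf{\Pi}^1_1$ set $\mb{C}(X\times Y)$ is exactly what keeps the hypothesis on the $\mathbf{\Sigma}^1_1$ side, the two descriptions agreeing because $d\in bgraph(X,Y)$; and totality of $\phi_d$ (again from $d\in bgraph(X,Y)$) ensures that for each $x$ there is a unique eligible $y$, namely the coordinatewise image, so that quantifying over all pairs $(x,y)$ is equivalent to the genuine homomorphism requirement.

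Assembling, $S$ is the intersection of the $\mathbf{\Pi}^1_1$ set $\{c:c\in\mb{BC}(X^{\leq\N})\text{ codes an }L\text{-structure}\}$ with the set $\{c:\exists d\,(d\in bgraph(X,Y)\ \wedge\ \phi_d\text{ is a homomorphism from the structure coded by }c\text{ to }\mc{H})\}$; the matrix after $\exists d$ is a conjunction of two $\mathbf{\Pi}^1_1$ conditions, hence $\mathbf{\Pi}^1_1$, so the second set is $\mathbf{\Sigma}^1_2$, and thus $S$ is $\mathbf{\Sigma}^1_2$. I do not expect a genuine obstacle here: the only delicate points are the arithmetic of the pointclasses — in particular that $\mathbf{\Sigma}^1_1$ and $\mathbf{\Pi}^1_1$ both absorb the countable conjunctions coming from infinitely many relations and from infinite arities, and that one must choose $\mb{A}$ versus $\mb{C}$ so the implication lands in $\mathbf{\Pi}^1_1$ — together with checking that the code-level reformulation of "admits a \Borel homomorphism" really is equivalent to the original.
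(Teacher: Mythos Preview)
Your proof is correct and follows essentially the same approach as the paper: quantify existentially over a code $d\in bgraph(X,Y)$ for the homomorphism, then observe that the remaining matrix (using $\mb{A}$ on the hypothesis side of each implication) is $\mathbf{\Pi}^1_1$. The paper's write-up is terser, but the displayed formula you give is exactly the unfolding of the paper's abbreviation for ``$f(x)\in R^{\mc{H}}_i$''.
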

\begin{proof}
	Let $Y$ be the underlying space of $\mc{H}$. Then $c\in S$ if $c$ codes a Borel $L$ structure, i.e., is a sequence of Borel codes $(c_i)$ for subsets of $X^{r_i}$ for $R_i \in L$ and 
    \[\exists f \ (f \in bgraph(X,Y) \land \forall i \ \forall x \in X^{r_i} \ (x \in \mb{A}(X^{r_i})_{c_i} \implies  f(x) \in R^{\mc{H}}_i)),\]
    where $f(x) \in R^\mathcal{H}_i$ abbreviates
    $\forall y \in Y^{r_i} \ ((\forall j<r_i \  (x_j,y_j) \in \mb{A}(X \times Y)_f) \implies y \in R^{\mc{H}}_i)$.
    Using Proposition \ref{pr:functions}, this shows that $S$ is $\mathbf{\Sigma}^1_2$.  
\end{proof}

\subsection{On equivalence relations}
If $E$ and $F$ are equivalence relations on spaces $X$ and $Y$, a \emph{reduction} from $E$ to $F$ is a map $f:X \to Y$ with $xEx' \iff f(x) F f(x')$. 
We denote by $E \sqcup F$ the equivalence relation on $X \sqcup Y(=X \times \{0\} \cup Y \times \{1\})$, where $(x,i)E \sqcup F (x',i')$ iff $i=i'=0$ and $xEx'$ or $i=i'=1$ and $xFx'$. Further, we denote by $E \times F$ the equivalence relation on $X \times Y$ defined by $(x,y) E\times F (x',y')$ iff $xEx'$ and $yFy'$. 

If $X$ is a Borel space $=_X$ stands for the equivalence relation of equality on $X$.

    Recall that $\mathbb{E}_0$ stands for the equivalence relation on $2^\N$ of eventual equality of sequences of binary digits.
    It is a standard fact that a CBER $E$ is hyperfinite iff it admits a Borel reduction to $\mathbb{E}_0$ (see \cite[Theorem 5.1]{doughertyjacksonkechris}).

\section{The proof}

We will identify infinite subsets of $\N$ with their increasing enumeration, $[\N]^\N$ stands for the collection of all those. If $x,y \in [\mathbb{N}]^{\mathbb{N}}$ let us use the notation $y \leq^\infty x$ in the case the set $\{n:y(n) \leq x(n)\}$ is infinite and $y \leq^* x$ if it is co-finite. Set \[\mathcal{D}=\{(x,y):y \leq^\infty x\}.\]

We will prove a theorem more general than the following one in Section  \ref{s:complexity}. 


    \begin{theorem}\label{t:complexitybb}
    Assume that there exists a non-hyperfinite CBER $E$ on $[\N]^\N$
    such that \mbox{$\restriction{({=}_{[\N]^\N} \times E)}{\mc{D}}$} is hyperfinite.
    Then the hyperfinite CBERs form a $\mathbf{\Sigma}^1_2$-complete set. 
    \end{theorem}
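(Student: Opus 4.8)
The plan is to establish the two halves separately: the class of hyperfinite CBERs is $\mathbf{\Sigma}^1_2$ (the easy part), and it is $\mathbf{\Sigma}^1_2$-hard (where the hypothesis on $E$ is used).

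For the upper bound, I would use that a CBER $F$ on a Polish space $X$ is hyperfinite exactly when it admits a Borel reduction to $\mathbb{E}_0$, together with the observation that a Borel reduction of $F$ to $\mathbb{E}_0$ is precisely a Borel homomorphism of the two-relation structure $(F,\ (X\times X)\setminus F)$ into $(\mathbb{E}_0,\ (2^\N\times 2^\N)\setminus\mathbb{E}_0)$: a homomorphism must carry $F$-related pairs to $\mathbb{E}_0$-related pairs and $F$-unrelated pairs to $\mathbb{E}_0$-unrelated pairs, and the conjunction of these two conditions says exactly that the map is a reduction. Since ``codes a CBER together with its complement'' is a $\mathbf{\Pi}^1_1$ condition and the class of hyperfinite CBERs is invariant under Borel isomorphisms of the underlying space, Proposition \ref{pr:homo} applied with $\mc{H}=(\mathbb{E}_0,\neg\mathbb{E}_0)$ shows that the hyperfinite CBERs form a $\mathbf{\Sigma}^1_2$ set.

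For hardness, I would reduce to the class of hyperfinite CBERs the set of Borel codes $d$ such that $\mb{A}([\N]^\N)_d$ is \emph{$\leq^\infty$-bounded}, meaning it is contained in some $\mc{D}^x:=\{y : y\leq^\infty x\}$; equivalently, $\mb{A}([\N]^\N)_d$ is not a dominating family. This set is evidently $\mathbf{\Sigma}^1_2$ (one real quantifier for the bound), and I would argue it is $\mathbf{\Sigma}^1_2$-complete by the standard well-founded-tree construction, using that the sets $\mc{D}^x$ form a family directed under inclusion whose countable subunions are again contained in a single $\mc{D}^x$, while $\bigcup_x\mc{D}^x=[\N]^\N$ (so a Borel set fails to be $\leq^\infty$-bounded exactly when it is dominating). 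Given a Borel $Z\subseteq[\N]^\N$, I would first fix, by the hypothesis, a Borel reduction $\Theta\colon\mc{D}\to 2^\N$ of $\restriction{({=}_{[\N]^\N}\times E)}{\mc{D}}$ to $\mathbb{E}_0$; then for each $x$ the map $y\mapsto\Theta(x,y)$ is a Borel reduction of $\restriction{E}{\mc{D}^x}$ to $\mathbb{E}_0$, so the CBERs $\restriction{E}{\mc{D}^x}$ are hyperfinite, uniformly in $x$. I would then let $E_Z$ be a Borel ``amalgam'' of the CBERs $\restriction{E}{\mc{D}^x}$ for $x\in Z$, glued along the ambient relation $E$, built so that: if $Z\subseteq\mc{D}^{x^*}$ for some $x^*$, then the single bound $x^*$ lets one uniformly resolve all the gluings through $\Theta$, so $E_Z$ Borel reduces to $\mathbb{E}_0$ and is hyperfinite; whereas if $Z$ is a dominating family, so that $\bigcup_{x\in Z}\mc{D}^x=[\N]^\N$, then $E_Z$ is rich enough that any Borel reduction of it to $\mathbb{E}_0$ would glue together into one of $E$ — impossible, since $E$ is not hyperfinite. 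Precomposing with a continuous reduction of an arbitrary $\mathbf{\Sigma}^1_2$ set into the set of $\leq^\infty$-bounded codes completes the hardness.

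The hard part will be constructing the amalgam $E_Z$ and verifying these two behaviors. The naive attempt — take the union of the relations $\restriction{E}{\mc{D}^x}$, $x\in Z$, on the single space $[\N]^\N$ — fails on two counts: this union need not be transitive, since $Z$ need not be directed, and ``$\exists x\in Z$'' quantifies over an uncountable set, so the relation is merely analytic, not Borel. One is thus forced to carry the index $x$ inside the underlying space — working on a Borel subset of $Z\times[\N]^\N$ — but then one must arrange that each $E_Z$-class meets only countably many fibres $\{x\}\times[\N]^\N$, so that $E_Z$ remains a CBER; in other words the identifications must move the index $x$ in lockstep with motion inside an $E$-class rather than freely. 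This calls for careful bookkeeping, presumably via a Feldman--Moore presentation $E=\bigcup_n\graph(\gamma_n)$, and the real crux is the proof that for a dominating $Z$ the resulting CBER is genuinely non-hyperfinite, so that its non-hyperfiniteness faithfully encodes the $\mathbf{\Pi}^1_2$-complete failure of $\leq^\infty$-boundedness of $Z$.
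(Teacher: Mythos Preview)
Your upper-bound argument via Proposition~\ref{pr:homo} and the reduction-as-homomorphism translation is correct and is exactly what the paper does.

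The hardness half, however, is only an outline with two genuine gaps. First, the claim that the set of codes of $\leq^\infty$-bounded (i.e.\ non-dominating) Borel sets is $\mathbf{\Sigma}^1_2$-complete is not established. A well-founded-tree argument produces $\mathbf{\Pi}^1_1$-hardness, not $\mathbf{\Sigma}^1_2$-hardness; the directedness and countable-closure you list simply say the $\mc{D}^x$ generate a $\sigma$-ideal, and there is no ``standard'' passage from that to $\mathbf{\Sigma}^1_2$-completeness of the set of Borel codes landing in the ideal. That step is itself a theorem of roughly the same strength as the result you are trying to prove. Second, and more seriously, you never construct $E_Z$. You correctly observe that neither of the naive candidates works: $\restriction{E}{Z}$ need not be non-hyperfinite when $Z$ is dominating (nothing in the hypothesis prevents $E$ from being hyperfinite on some dominating Borel set not of the form $\mc{D}^x$), while the disjoint union of the $\restriction{E}{\mc{D}^x}$ over $x\in Z$ is always hyperfinite, being a sub-relation of the hyperfinite $\restriction{({=}\times E)}{\mc{D}}$. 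You name the obstacles to a genuine gluing --- Borelness, countability of classes, and above all non-hyperfiniteness on the dominating side --- but you do not overcome any of them. The proposal stops exactly where the argument would have to begin.

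The paper sidesteps both issues by invoking Theorem~\ref{t:maincomplex}, the black box from \cite{todorvcevic2021complexity}. One takes $\Psi(A)$ to be the set of codes of Borel reductions of $\restriction{E}{A}$ to $\mathbb{E}_0$; the hypothesis that $\restriction{({=}\times E)}{\mc{D}}$ is hyperfinite supplies, uniformly in $x$, a reduction of $\restriction{E}{\mc{D}_x}$ to $\mathbb{E}_0$ (Lemma~\ref{l:nondom}), which is precisely what is needed to verify that $\mc{F}^\Psi$ is \emph{nicely} $\mathbf{\Sigma}^1_1$-hard in the sense of Definition~\ref{d:nicely}. The black box then bootstraps this to $\mathbf{\Sigma}^1_2$-hardness of the uniform family $\mc{U}^\Psi$, and Lemma~\ref{l:needed}\eqref{c:ufi} identifies $C\in\mc{U}^\Psi$ with hyperfiniteness of $\restriction{({=}_{\N^\N}\times E)}{C}$. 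So the paper's ``amalgam'' is nothing more than a restriction of the product relation --- no gluing is required --- but the Borel set $C$ to which one restricts is produced by the black box rather than by a direct reduction from non-dominating codes.
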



    \begin{corollary}
        \label{c:complexitybb2}
        Assume that there exists a non-hyperfinite CBER $E$
        and a Borel map $\Phi : E \to [\N]^\N$ such that
        the subgraph of ${=}_{[\N]^\N} \times E$
        consisting of edges $((x, y), (x, y'))$ with $\Phi(y, y') \le^\infty x$
        is hyperfinite.
        Then the hyperfinite CBERs form a $\mathbf{\Sigma}^1_2$-complete set.
    \end{corollary}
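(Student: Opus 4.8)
The plan is to derive the corollary from Theorem~\ref{t:complexitybb}, by manufacturing from the pair $(E,\Phi)$ a non-hyperfinite CBER $E'$ on $[\N]^\N$ for which $\restriction{({=}_{[\N]^\N}\times E')}{\mc{D}}$ is hyperfinite. First, since an equivalence relation on a countable standard Borel space is smooth, hence hyperfinite, the underlying space of $E$ must be uncountable; transporting $E$, $\Phi$ and the given subgraph along a Borel isomorphism of that space with $[\N]^\N$, we may assume $E$ is a CBER on $[\N]^\N$. Fix Borel functions $e_i\colon[\N]^\N\to[\N]^\N$ ($i\in\N$) with $e_0=\id$ and $[y]_E=\{e_i(y):i\in\N\}$ for every $y$ (Feldman--Moore), and define a Borel map $\rho\colon[\N]^\N\to[\N]^\N$ by
\[
  \rho(y)(n)=n+\max\bigl\{\Phi(y,e_i(y))(m),\ \Phi(e_i(y),y)(m):i\leq n,\ m\leq n\bigr\}.
\]
The key property of this running supremum is that for every $y$ and every $y'\in[y]_E$ one has $\rho(y)(n)\geq\Phi(y,y')(n)$ and $\rho(y)(n)\geq\Phi(y',y)(n)$ for all sufficiently large $n$; hence if $x$ satisfies $\rho(y)\leq^\infty x$, then $\Phi(y,y')\leq^\infty x$ for \emph{every} $y'\in[y]_E$, so in the fibre of the subgraph over $x$ the whole $E$-class $[y]_E$ is a clique --- in particular it is contained in a single connected component.

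The obstacle is that this is an edge condition, ``$\Phi(y,y')\leq^\infty x$'', whereas Theorem~\ref{t:complexitybb} asks for a vertex condition, ``$y\leq^\infty x$''. The fix is to recoordinatize $[\N]^\N$ along a Borel injection pointwise dominating $\rho$: pick a Borel injection $\iota\colon[\N]^\N\to 2^\N$ and set $j(y)(n)=2\rho(y)(n)+\iota(y)(n)$. Then $j$ is a Borel injection into $[\N]^\N$ --- one recovers $\iota(y)(n)$ as the parity of $j(y)(n)$ --- and $j(y)(n)\geq\rho(y)(n)$ for all $n$, so $j(y)\leq^\infty x$ implies $\rho(y)\leq^\infty x$. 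Let $E'$ be the CBER on $[\N]^\N$ obtained by transporting $E$ via $j$ onto the Borel set $j([\N]^\N)$ and declaring every point outside $j([\N]^\N)$ to be $E'$-equivalent only to itself. Since $\restriction{E'}{j([\N]^\N)}\cong E$ and the remainder of $E'$ is smooth, $E'$ is a non-hyperfinite CBER on $[\N]^\N$.

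It remains to show that $F:=\restriction{({=}_{[\N]^\N}\times E')}{\mc{D}}$ is hyperfinite. Write $\mc{D}=\mc{D}_0\sqcup\mc{D}_1$ with $\mc{D}_0=\{(x,z)\in\mc{D}:z\in j([\N]^\N)\}$; both are $F$-invariant Borel sets. On $\mc{D}_1$ every $F$-class is a singleton, so $\restriction{F}{\mc{D}_1}$ is smooth, hence hyperfinite. On $\mc{D}_0$: let $\mathbf{F}$ be the equivalence relation on $[\N]^\N\times[\N]^\N$ whose classes are the connected components of the subgraph (hyperfinite by hypothesis), fix a Borel reduction $h$ of $\mathbf{F}$ to $\mathbb{E}_0$, and put $g(x,z)=h(x,j^{-1}(z))$ for $(x,z)\in\mc{D}_0$. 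If $(x,z)\mathrel{F}(x',z')$ with both points in $\mc{D}_0$, then $x=x'$, $j^{-1}(z)\mathrel{E}j^{-1}(z')$, and $z\leq^\infty x$; hence $\rho(j^{-1}(z))\leq^\infty x$, and so by the key property $(x,j^{-1}(z))$ and $(x,j^{-1}(z'))$ are joined by an edge of the subgraph, giving $g(x,z)\mathrel{\mathbb{E}_0}g(x',z')$. Conversely, $g(x,z)\mathrel{\mathbb{E}_0}g(x',z')$ forces $(x,j^{-1}(z))\mathrel{\mathbf{F}}(x',j^{-1}(z'))$, hence $x=x'$ and (edges of the subgraph stay within a single $E$-class) $j^{-1}(z)\mathrel{E}j^{-1}(z')$, i.e.\ $(x,z)\mathrel{F}(x',z')$. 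So $g$ is a Borel reduction of $\restriction{F}{\mc{D}_0}$ to $\mathbb{E}_0$, hence $\restriction{F}{\mc{D}_0}$ is hyperfinite; and $F$, being the disjoint union over complementary invariant Borel sets of the two hyperfinite relations $\restriction{F}{\mc{D}_0}$ and $\restriction{F}{\mc{D}_1}$, is hyperfinite. Theorem~\ref{t:complexitybb} applied to $E'$ now yields the conclusion.

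I expect the only genuinely delicate step to be the one flagged above: converting the per-pair condition $\Phi(y,y')\leq^\infty x$ into a per-point condition, which is precisely what the running supremum $\rho$ (which dominates all pairwise $\Phi$-values along a whole $E$-class simultaneously) and the parity encoding defining $j$ (which keeps that domination while remaining injective) accomplish. Everything else --- existence of the Borel enumeration $(e_i)$ of $E$-classes, Borelness of the injective image $j([\N]^\N)$, and the standard facts that smooth CBERs are hyperfinite and that a finite disjoint union of hyperfinite CBERs over complementary invariant Borel sets is hyperfinite --- is routine.
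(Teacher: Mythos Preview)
Your proof is correct and follows essentially the same strategy as the paper's: build a Borel injection into $[\N]^\N$ that eventually dominates $\Phi$ along each $E$-class (your $j$ via the explicit running supremum $\rho$, the paper's $\Psi$ via Luzin--Novikov and a ``standard encoding''), push $E$ forward, and check that on $\mc{D}$ the resulting product relation sits inside the hypothesized hyperfinite subgraph. One harmless slip: from $\rho(y)\le^\infty x$ you only get that $y$ is adjacent to every $y'\in[y]_E$ (a star, not a clique), but your formal argument only uses the single edge between $(x,y)$ and $(x,y')$, so nothing is affected.
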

    
    \begin{proof}
        Denote by $Y$ the underlying space of $E$.
        Define first a Borel injection $\Psi:Y \to [\N]^\N$ so that 
        $\Psi(y) \ge^* \Phi(e)$ for every pair $e$ from the $E$-class of $y$:
        this is easy to do, since as $E$ is a CBER,
        for each $y$ there are only countably many such pairs
        which can be enumerated in a Borel way using the Luzin-Novikov theorem;
        moreover, one can use a standard encoding to ensure that $\Psi$ is injective.
        Now, let $E'$ be the pushforward of $E$ by $\Psi$,
        i.e. let $E'$ be ${=}_{[\N]^\N} \cup (\Psi\times\Psi)(E)$.
        Note that $E'$ is a CBER;
        it is Borel since $\Psi$ is injective.
    
        We check that Theorem \ref{t:complexitybb} can be used with $E'$.
        By the injectivity of $\Psi$,
        $\restriction{E'}{ran(\Psi)}$ is isomorphic to $E$,
        hence in particular it is not hyperfinite.
        It remains to show that
        \mbox{$\restriction{(=_{[\N]^\N} \times E')}{\mathcal D}$}
        is hyperfinite.
        By pulling back with $\Psi$,
        this is equivalent to showing that the restriction of ${=}_{[\N]^\N} \times E$
        to the set $\{(x, y) : \Psi(y) \le^\infty x\}$ is hyperfinite.
        For every edge $((x, y), (x, y'))$ in this restriction,
        we have $(y, y') \in E$ so $\Phi(y, y') \le^* \Psi(y)$,
        but we also have $\Psi(y) \le^\infty x$,
        and thus $\Phi(y, y') \le^\infty x$.
        So this restriction is contained a hyperfinite graph,
        and hence hyperfinite.
    \end{proof}

\begin{proof}[Proof of Theorem \ref{t:main}]

    Assume that $E$ is a non-hyperfinite equivalence relation with $E = \bigcup E_i$,
    where the union is increasing and all $E_i$ are hyperfinite.
    For each $s \in \N^{<\N}$ we define a finite subequivalence relation $F_s$ of $E$
    so that
    \begin{itemize}
        \item if $s \subseteq t$, we have $F_s \subseteq F_t$,
        \item if $s \in \N^n$,
            then $F_{s\concatt (k)} \subseteq F_{s\concatt (k + 1)}$ for all $k$,
            and $E_{n+1} = \bigcup_k F_{s\concatt (k)}$.
    \end{itemize}
    For this we use the next classical lemma.
    \begin{lemma}
        Assume that $F \subseteq F'$ are CBERs with $F$ finite and $F'$ hyperfinite. Then there is an increasing sequence $(F_n)_{n \in \N}$ of finite superequivalence relations of $F$ so that $\bigcup_n F_n = F'$. 
    \end{lemma}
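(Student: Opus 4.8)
The plan is to work modulo $F$ via a Borel transversal. Since $F$ is a finite Borel equivalence relation, it admits a Borel transversal: by the Luzin--Novikov theorem write $F = \bigcup_n \graph(f_n)$ for Borel partial functions $f_n$, fix a Borel linear order $<$ on $X$, and let $\pi \from X \to X$ send $x$ to the $<$-least element of $[x]_F$ (this is $<$-well-defined because $[x]_F$ is finite, and Borel since $\pi(x)=y \iff (x,y)\in F \land \forall n\,(x\in\dom(f_n)\to y< f_n(x)\lor y=f_n(x))$). Then $\pi$ is Borel, $F$-invariant and idempotent, $T := \pi[X] = \{x : \pi(x)=x\}$ is Borel, $\pi$ is a Borel retraction onto $T$, and $\pi^{-1}(t)=[t]_F$ for each $t\in T$.

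Next, consider $F'' := \restriction{F'}{T}$, a CBER on $T$. Since $F$ refines $F'$, every $F'$-class is a union of $F$-classes, so for all $x,y\in X$ we have $x\mathrel{F'}y \iff \pi(x)\mathrel{F''}\pi(y)$. A restriction of a hyperfinite CBER is hyperfinite (restricting an increasing exhausting sequence of finite subequivalence relations again yields one), so we may fix an increasing sequence $(F''_n)_{n\in\N}$ of finite CBERs on $T$ with $\bigcup_n F''_n = F''$.

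Finally, set $F_n := (\pi\times\pi)^{-1}(F''_n)$, that is, $x\mathrel{F_n}y \iff \pi(x)\mathrel{F''_n}\pi(y)$. Each $F_n$ is Borel (as $\pi$ and $F''_n$ are), and as the preimage of an equivalence relation under a map it is an equivalence relation; it contains $F$, since $x\mathrel{F}y$ implies $\pi(x)=\pi(y)$. The sequence $(F_n)_n$ is increasing because $(F''_n)_n$ is, and $\bigcup_n F_n = (\pi\times\pi)^{-1}(F'') = F'$ by the displayed equivalence above. It remains to check that each $F_n$ has finite classes: the $F_n$-class of $x$ is $\pi^{-1}\big([\pi(x)]_{F''_n}\big) = \bigcup_{t\in[\pi(x)]_{F''_n}}[t]_F$, a finite union (since $[\pi(x)]_{F''_n}$ is finite) of finite sets (since $F$ is finite), hence finite. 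Thus $(F_n)_n$ is as required.

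The only mildly delicate point — and the step I would be most careful to state cleanly — is the construction of the Borel transversal $T$ and the Borelness of $\pi$; once $\pi$ is in hand, everything else is routine bookkeeping. Alternatively one may simply invoke the standard fact that finite (indeed, smooth) Borel equivalence relations admit Borel transversals.
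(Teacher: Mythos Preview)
Your proof is correct and follows essentially the same approach as the paper: the paper's Borel selector $s$ is exactly your retraction $\pi$, and the paper likewise defines $F_n = (s\times s)^{-1}(\tilde F_n)$ for an increasing exhaustion $(\tilde F_n)_n$ of $\restriction{F'}{\ran(s)}$. You simply provide more detail on the construction and verification, whereas the paper invokes the existence of a Borel selector directly.
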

    \begin{proof}
        Let $s$ be a Borel selector for $F$.
        Then $\restriction{F'}{\ran(s)}$ is hyperfinite,
        so it is an increasing union
        $\bigcup_n \tilde F_n$ of finite CBERs.
        Then we are done by taking $F_n = (s \times s)^{-1}(\tilde F_n)$.
    \end{proof}
    Now assume that $F_s$ has been already defined for some $s \in \N^n$.
    Using the lemma and the assumption that $F_s \subseteq E_n \subseteq E_{n+1}$,
    we can find an increasing sequence of finite superequivalence relations of
    $F_{s}$, $(F'_k)_{k \in \N}$ with $\bigcup_k F'_k = E_{n+1}$.
    Let $F_{s \concatt (k)}=F'_k$. 
    
    For each $x \in [\N]^\N$,
    let $F_x = \bigcup_n F_{\restriction{x}{n}}$
    (note that this is an increasing union).
    Then the subequivalence relation of ${=}_{[\N]^\N} \times E$
    defined by
    \[
        \text{$(x, y)$ and $(x, y')$ are related}
        \iff
        (y, y') \in F_x
    \]
    is hyperfinite.
    Hence to use Corollary \ref{c:complexitybb2}
    it suffices to construct a map $\Phi:E \to [\N]^\N$
    so that $F_x \supseteq \{e \in E : \Phi(e) \le^\infty x\}$ for all $x$.

    Note that the relation
    $\{(e, x) \in E \times [\N]^\N: e \notin F_x\}$
    has $K_\sigma$ fibers:
    if $e \in E_n$,
    then for every $s \in \N^n$,
    the tree $\{t \in \N^{<\N}: e \notin F_{s\concatt t}\}$ is finitely branching,
    so the set $\{x \in \N^\N: s \prec x \text{ and } e \notin F_x\}$ is compact.
    Hence by $K_\sigma$-uniformization
    (see \cite[35.46]{kechrisclassical}),
    there are Borel maps $K_n : E \to K([\N]^\N)$
    with $\{x \in \N^\N : e \notin F_x\} = \bigcup_n K_n(e)$ for every $e$.
    Fix a Borel map $f : K([\N]^\N) \to [\N]^\N$
    such that $x \le f(K)$ for all $K$ and all $x \in K$,
    and fix a Borel map $g : ([\N]^\N)^\N \to [\N]^\N$
    such that $x_n <^* g((x_n)_n)$ for all $(x_n)_n$ and all $n$.
    Define $\Phi(e) = g((f(K_n(e)))_n)$.
    To see that this works,
    note that for every $x$,
    if $e \notin F_x$,
    then $x \in K_n(e)$ for some $n$,
    so $x \le f(K_n(e)) <^* \Phi(e)$.

\end{proof}

\section{A black box on $\mathbf{\Sigma}^1_2$-completeness}
\label{s:complexity}

While for the main result of this paper we don't need a theorem which is as general as the one below, it still might be useful to state and prove it in its current form, in order for the future applicability. Let $\mc{G}'$ stand for the structure on $\N^\N \times X$ where each vertical section is a copy of $\mc{G}'$ and no additional elements are related (i.e., $(x_j,y_j)_j \in R^{\mc{G'}}_i \iff \forall j,j' \ x_j=x_j' \land (y_j)_j \in R^{\mc{G}}_i$). 

\begin{theorem}
	\label{t:complexity}
	Let $\mathcal{H}$ be a Borel $L$-structure on some Polish space $Z$ and assume that there exists a Borel $L$-structure $\mathcal{G}$ on $[\N]^\N$ that does not admit a Borel homomorphism to $\mathcal{H}$ and a Borel map $\Phi:\mc{D} \to Z$ so that for each $x$ we have that $\Phi_x$ is a homomorphism from $\restriction{\mathcal{G}}{\mathcal{D}_x}$ to $\mathcal{H}$. Then the Borel $L$-structures which admit a Borel homomorphism to $\mathcal{H}$ form a $\mathbf{\Sigma}^1_2$-complete set. 
	
	In fact, already substructures of $\mc{G}'$ that admit a homomorphism to $\mc{H}$ form a $\mathbf{\Sigma}^1_2$-complete set.
\end{theorem}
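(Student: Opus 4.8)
The upper bound is already essentially in hand: Proposition \ref{pr:homo} shows that for any fixed $\mc{H}$, the collection of Borel $L$-structures admitting a Borel homomorphism to $\mc{H}$ is $\mathbf{\Sigma}^1_2$, and the same applies to the subclass of substructures of $\mc{G}'$. So the entire content is the hardness part. The goal is to produce, for a given $\mathbf{\Sigma}^1_2$ set $S \subseteq \om^\om$, a continuous map $s \mapsto \mc{K}_s$ assigning to each $s$ a Borel substructure of $\mc{G}'$ (given by a Borel code, depending continuously on $s$) such that $\mc{K}_s$ admits a Borel homomorphism to $\mc{H}$ if and only if $s \in S$. I would use the normal form for $\mathbf{\Sigma}^1_2$ sets: $s \in S \iff \exists x \in \om^\om \ \forall y \in \om^\om \ R(s,x,y)$ for a closed (or Borel) relation $R$; equivalently $s \in S$ iff there is some $x$ such that a certain tree $T_{s,x}$ on $\om$ is well-founded, i.e. iff there is $x$ with $T_{s,x}$ having no infinite branch.

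\textbf{The construction.} The structure $\mc{G}$ on $[\N]^\N$ is the obstruction: it has no Borel homomorphism to $\mc{H}$, yet each of its restrictions $\restriction{\mc{G}}{\mc{D}_x} = \restriction{\mc{G}}{\{y : y \le^\infty x\}}$ does, witnessed by $\Phi_x$. The plan is to take $\mc{K}_s$ to be (a copy of) $\mc{G}'$ where in the $x$-th vertical fiber we \emph{keep only those vertices $y$ that are "eventually dominated in the relevant sense" once $x$ is large enough relative to the $\mathbf{\Sigma}^1_2$ witness}. Concretely: using the tree representation, for each $s$ build a Borel set $B_s \subseteq \om^\om \times [\N]^\N$ so that the $x$-fiber $(B_s)_x$ is a subset of $[\N]^\N$, and let $\mc{K}_s = \restriction{\mc{G}'}{B_s}$. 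One wants: if $s \in S$, witnessed by $x_0$, then one can find a single $x^* \in [\N]^\N$ dominating enough of the data coded by $x_0$ so that $(B_s)_x \subseteq \mc{D}_{x^*}$ for \emph{every} $x$ appearing (after a shift), whence the maps $\Phi_{x^*}$ glue along the fibers of $\mc{G}'$ (recall distinct fibers are unrelated in $\mc{G}'$) to give a Borel homomorphism $\mc{K}_s \to \mc{H}$; and conversely, if $s \notin S$, then some fiber $(B_s)_x$ contains a full copy of $[\N]^\N$, so $\restriction{\mc{G}'}{B_s}$ contains a copy of $\mc{G}$ and hence admits no Borel homomorphism to $\mc{H}$. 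The domination bookkeeping — turning "$T_{s,x}$ is well-founded" into "there is a single element of $[\N]^\N$ that $\le^\infty$-dominates all the branch-segments that show up" — is exactly the place where the Corollary \ref{c:complexitybb2}-style argument in the proof of Theorem \ref{t:main} (the $K_\sigma$-uniformization and the maps $f,g$ producing a dominating real) gets reused, now relativized to the $\mathbf{\Sigma}^1_2$ witness $x$.

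\textbf{Gluing and checking continuity.} Two routine-but-necessary checks remain. First, continuity of $s \mapsto \mc{K}_s$ in the Borel-code sense: since $B_s$ is Borel in $(x,y)$ and depends on $s$ via the closed relation $R$ in a Borel (indeed, for a suitable normal form, arithmetically simple) way, Proposition \ref{f:prel} gives a Borel — and with care, continuous — assignment of Borel codes; one restricts each relation $R^{\mc{G}'}_i$ to $(B_s)^{r_i}$ and codes that. Second, the gluing in the "$s \in S$" direction: because $\mc{G}'$ relates tuples only within a single fiber $\{x\} \times [\N]^\N$, a homomorphism $\mc{K}_s \to \mc{H}$ is just a fiberwise family of homomorphisms $\restriction{\mc{G}}{(B_s)_x} \to \mc{H}$, and it is Borel iff it is Borel uniformly in $x$; using $\Phi_{x^*}$ (or $\Phi$ applied to a Borel-in-$x$ choice of dominating real) on each fiber does this.

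\textbf{Main obstacle.} The crux is the "$s \in S \Rightarrow$ homomorphism exists" direction: one must convert the $\mathbf{\Sigma}^1_2$ witness into an honest, Borel, fiberwise-uniform choice of a $\le^\infty$-dominating element of $[\N]^\N$ that makes $(B_s)_x$ land inside $\mc{D}_{x^*(x)}$ simultaneously for the required set of fibers. A single $x$ can $\le^\infty$-dominate only "infinitely often", not "eventually", so the set $B_s$ has to be engineered (via the tree/enumeration coding) precisely so that $\le^\infty$-domination by one well-chosen real suffices — which is why the hypothesis is stated with $\le^\infty$ and $\mc{D}$ rather than $\le^*$. Getting that combinatorial alignment right, and verifying the converse direction that a failed witness genuinely leaves a full copy of $[\N]^\N$ (hence of $\mc{G}$) in some fiber, is the real work; the rest is coding bookkeeping.
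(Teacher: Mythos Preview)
Your proposal has a genuine gap: you attempt a direct reduction from an arbitrary $\mathbf{\Sigma}^1_2$ set $S$, but you never actually define the Borel set $B_s$, and there is no obvious candidate. The paper does \emph{not} reduce directly from $\mathbf{\Sigma}^1_2$; instead it verifies the hypotheses of a black-box theorem (Theorem~\ref{t:maincomplex}, from \cite{todorvcevic2021complexity}), which upgrades a ``nicely $\mathbf{\Sigma}^1_1$-hard'' condition to $\mathbf{\Sigma}^1_2$-hardness of the associated uniform family $\mathcal{U}^\Psi$. Concretely, the paper only needs to reduce an arbitrary \emph{analytic} set $A=\proj_0(F)$: it sets $B_s=\{y:(\forall x\le^* y)\,x\notin F_s\}$, so that $s\in A$ gives $B_s\subseteq\mathcal{D}_{x'}$ for any $x'\in F_s$ (hence a homomorphism via $\Phi_{x'}$), while $s\notin A$ gives $B_s=[\N]^\N$ (hence no homomorphism). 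The passage from this $\mathbf{\Sigma}^1_1$-level dichotomy to $\mathbf{\Sigma}^1_2$-hardness for substructures of $\mathcal{G}'$ is entirely delegated to the black-box, together with the routine identification $C\in\mathcal{U}^\Psi\iff\restriction{\mathcal{G}'}{C}$ admits a Borel homomorphism to $\mathcal{H}$.

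Your sketch also misreads the role of the $K_\sigma$-uniformization and the dominating-real maps $f,g$: in the paper these appear only in the proof of Theorem~\ref{t:main}, where they are used to verify the \emph{hypothesis} of Corollary~\ref{c:complexitybb2} starting from a hyper-hyperfinite witness. They are not part of the proof of Theorem~\ref{t:complexity} at all, and in particular they do not ``relativize to the $\mathbf{\Sigma}^1_2$ witness'' to produce the reduction you want. The heavy lifting you are trying to do by hand --- turning an existential-over-reals witness into a uniform fiberwise Borel homomorphism --- is exactly the content of Theorem~\ref{t:maincomplex}, whose proof (in \cite{todorvcevic2021complexity}) uses nontrivial effective descriptive set theory and is not reproducible by the domination bookkeeping you outline.
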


Note that the ``in fact" part of this result directly yields Theorem \ref{t:complexitybb}: Indeed, to each CBER $E$ one can associate a Borel structure $\mc{G}_E$ with two binary relations which are interpreted as $E$ and $X^2 \setminus E$. Then, the existence of a homomorphism between $\mc{G}_E$ and $\mc{G}_{E'}$ is equivalent to the existence of a reduction from $E$ to $E'$. Moreover, restrictions of a structure of the form $\mc{G}_E$ is the same as the structure coming from restricting the equivalence relation first. 

    Since a CBER is hyperfinite iff it reduces to $\mathbb{E}_0$,
    the result follows with $\mathcal{H} = \mathcal{G}_{\mathbb{E}_0}$.

Before proving the statement let us recall the general theorem established in \cite{todorvcevic2021complexity}.

Let $X,Y$ be uncountable Polish spaces, $\bbg$ be a class of Borel sets and
$\Psi :\bbg(X) \to \bp(Y)$ be a map.
Define $\mc{F}^{\Psi}\subset \bbg(X)$ by $A \in \mc{F}^{\Psi} \iff \Psi(A) \not = \emptyset$ 
and let the \emph{uniform family, $\mc{U}^{\Psi}$,} be defined as follows: for $B \in \bbg(\om^\om \times X)$ let
\[\overline{\Psi}(B)=\{(s,y) \in  \om^\om \times Y:y \in \Psi(B_s)\},\]
and 
\[B \in \mc{U}^{\Psi} \iff \overline{\Psi}(B) \text{ has a full Borel uniformization}\]
(that is, it contains the graph of a Borel function $\oom \to Y$).

The next definition captures the central technical condition. 
\begin{definition}
\label{d:nicely}
The family $\mathcal{F}^{\Psi}$ is said to be \emph{nicely $\bs$-hard on $\bbg$} if for every $A \in \bs(\om^\om)$  there exist sets $B \in \bbg(\om^\om \times X)$ and $D \in \bs(\om^\om \times Y)$ so that
$D \subset \overline{\Psi}(B)$
    and for all $s \in \om^\om$ we have 
\[  s \in A \iff D_s \not = \emptyset \iff \Psi(B_s)\not= \emptyset \ ( \ \iff B_s \in \mathcal{F}^\Psi).\]

\end{definition}	

A map $\Psi: \mathbf{\Gamma}(X)\to \mathbf{\Pi}^1_1(Y)$ is called \emph{$\mathbf{\Pi}^1_1$ on $\mathbf{\Gamma}$} if for every Polish space $P$ and $A\in \mathbf{\Gamma}(P \times X)$ we
have $\{(s,y) \in P \times Y:y \in \Psi(A_s)\}\in \mathbf{\Pi}^1_1$. Now we have the following theorem.

\begin{theorem}[\cite{todorvcevic2021complexity}, Theorem 1.6]
	
	\label{t:maincomplex}
	Let $X,Y$ be uncountable Polish spaces, $\bbg$ be a class of subsets of Polish spaces which is closed under continuous preimages, finite unions and intersections and $\mathbf{\Pi}^0_1 \cup \mathbf{\Sigma}^0_1 \subset \bbg$.   Suppose that $\Psi:\bbg(X) \to \mathbf{\Pi}^1_1(Y)$ is $\bp$ on $\bbg$ and that $\mathcal{F}^\Psi$ is nicely $\bs$-hard on $\bbg$. 
	Then the family $\mathcal{U}^\Psi$ is $\mathbf{\Sigma}^1_2$-hard on $\bbg$.
\end{theorem}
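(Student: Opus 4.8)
The plan is to verify the definition of $\mathbf{\Sigma}^1_2$-hardness on $\bbg$ directly: I will construct a single set $\mathbb{B} \in \bbg(\oom \times \oom \times X)$ such that $s \mapsto \mathbb{B}_s$ continuously reduces an arbitrary $\mathbf{\Sigma}^1_2$ set to $\{s : \mathbb{B}_s \in \mc{U}^\Psi\}$. The reduction is guided by the observation that membership in $\mc{U}^\Psi$ already has $\mathbf{\Sigma}^1_2$ shape: since $\Psi$ is $\bp$ on $\bbg$, the set $\overline{\Psi}(B)$ is $\bp$, so $B \in \mc{U}^\Psi$ asserts the existence of a Borel function $h \colon \oom \to Y$ (an existential real quantifier) satisfying the $\bp$ condition $\forall t\ h(t) \in \Psi(B_t)$. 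I want the existential Borel function to absorb the outer real quantifier of the target set and the per-fibre coanalytic membership to absorb the inner $\bp$ matrix.

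Fix a $\mathbf{\Sigma}^1_2$-hard set in the normal form $s \in S \iff \exists z \in \Cantorspace\ \forall w \in \oom\ R(s,z,w)$ with $R$ Borel. The only leverage the hypotheses give over the fibres is control of \emph{nonemptiness}: because $\mc{F}^\Psi$ is nicely $\bs$-hard, applying the definition to the $\bs$ (indeed Borel) set coding $R$ and reindexing along a standard homeomorphism produces, after pulling back and using closure of $\bbg$ under continuous preimages and finite Boolean operations, a $\bbg$-family whose $\Psi$-value is nonempty exactly when the relevant instance of $R$ holds; moreover the accompanying $\bs$ set $D \subseteq \overline{\Psi}(B)$ lets one actually name a point of $Y$ in each nonempty fibre, which is what allows a verified configuration to be upgraded into honest values of a $Y$-valued selector.

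The main obstacle is that nonemptiness control alone gives the wrong quantifier: a plain product realization, demanding a value at every coordinate $(u,w)$, would force $\forall u\ \forall w\ R(s,u,w)$ and hence only $\bs$-hardness. To convert the universal outer quantifier into the existential one, I would, following the classical template for the $\mathbf{\Sigma}^1_2$-completeness of Borel uniformizability, take the domain of the prospective selector to be a tree of finite approximations to the witness $z$ and arrange the fibres so that totality of a Borel function forces it to track a single infinite branch; along that branch the nonempty-fibre conditions read off $\forall w\ R(s,z,w)$, while the rigidity of being a genuine function across the continuum of nodes is exactly what pins down one branch, encoding $\exists z$ rather than $\forall z$. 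With this in place one obtains the key equivalence
\[ \mathbb{B}_s \in \mc{U}^\Psi \iff \exists z \in \Cantorspace\ \forall w \in \oom\ R(s,z,w) \iff s \in S. \]

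It then remains to check the routine bookkeeping — that $s \mapsto \mathbb{B}_s$ is continuous, that $\mathbb{B} \in \bbg(\oom \times \oom \times X)$ by the closure assumptions on $\bbg$, and that $\overline{\Psi}(\mathbb{B}_s)$ is $\bp$, so that the object being measured really is the uniformization problem. The subtle half of the equivalence is the failure direction: when no witness $z$ exists I must show that the branch structure leaves every candidate total Borel function undefined somewhere, so that $\overline{\Psi}(\mathbb{B}_s)$ admits no full Borel uniformization. This is precisely where the full strength of \emph{nice} $\bs$-hardness is used rather than bare $\bs$-hardness: the inclusion $D \subseteq \overline{\Psi}(\mathbb{B}_s)$ together with the coanalyticity of the fibre conditions is what transfers the nonexistence of a Borel uniformization for the abstract tree problem to its nonexistence for $\overline{\Psi}(\mathbb{B}_s)$, and I expect verifying this transfer to be the technical crux of the argument.
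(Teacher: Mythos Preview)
The paper does not contain a proof of this theorem: it is quoted verbatim from \cite{todorvcevic2021complexity} and used as a black box in the proof of Theorem~\ref{t:complexity}. There is therefore no argument in the present paper to compare your proposal against.

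Judging your sketch on its own terms, the large-scale quantifier accounting is correct: the existential over Borel uniformizing functions should absorb the outer $\exists z$ of a $\mathbf{\Sigma}^1_2$ normal form, and the ``full'' requirement should absorb the inner $\bp$ matrix, with the $\bp$-on-$\bbg$ hypothesis guaranteeing that $\overline{\Psi}(\mathbb{B}_s)$ really is $\bp$. Two points, however, do not hold up. First, the mechanism you propose for turning the outer universal into an existential---``take the domain of the prospective selector to be a tree of finite approximations to $z$ and arrange the fibres so that totality of a Borel function forces it to track a single infinite branch''---does not work as described: a \emph{total} selector is defined at every node of the tree, so totality alone cannot single out a branch, and you provide no device forcing the values at incomparable nodes to cohere into a single $z$. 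The construction in \cite{todorvcevic2021complexity} recovers $z$ by a different route, essentially reading it off from (a code for) the uniformizing function rather than from a branch of the domain. Second, you have the role of the analytic set $D$ reversed. The inclusion $D\subseteq\overline{\Psi}(B)$ is what makes the \emph{success} direction go through: when $s\in S$ one uses $D$ (which is $\bs$ with nonempty sections) to produce an actual Borel selector inside the larger $\bp$ set $\overline{\Psi}(B)$. The \emph{failure} direction is the easy one---when $s\notin S$ the fibres $\Psi(B_t)$ are arranged to be empty on a nonempty set of $t$'s, which blocks any uniformization, Borel or otherwise, without reference to $D$.
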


We start with an easy lemma.

\begin{lemma} 
	\label{l:nondom} There exists a Borel function $f: \omm \to \oom$ so that for each $x\in \omm$ we have $f(x) \in \mb{BC}(\N^\N \times [\N]^\N)$ with $\mb{A}(\N^\N \times [\N]^\N)_{f(x)}$ a code for the graph of a Borel homomorphism from $\restriction{\mathcal{G}}{\mc{D}_x}$ to $\mc{H}$. 
\end{lemma}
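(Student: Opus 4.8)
The statement to prove, Lemma~\ref{l:nondom}, asserts the existence of a Borel function $f: \omm \to \oom$ such that for each $x \in \omm$, $f(x)$ is a Borel code for (the graph of) the homomorphism $\Phi_x : \restriction{\mathcal{G}}{\mathcal{D}_x} \to \mathcal{H}$ guaranteed by the hypothesis of Theorem~\ref{t:complexity}.

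\textbf{Plan.} The plan is to produce $f$ by applying the third bullet of Proposition~\ref{f:prel} (the uniformity/parametrization property of the Borel coding) to a suitable Borel set over the parameter space $\omm$. First I would observe that $\mathcal{D}$ is a Borel (indeed $F_\sigma$) subset of $[\N]^\N \times [\N]^\N$, so $\Phi : \mathcal{D} \to Z$ is a Borel map defined on a Borel set, and we may view $\mathcal{D}_x = \{y \in [\N]^\N : y \le^\infty x\}$ as the $x$-section of $\mathcal{D}$. For each fixed $x$, the map $\Phi_x : \mathcal{D}_x \to Z$ is a homomorphism from $\restriction{\mathcal{G}}{\mathcal{D}_x}$ to $\mathcal{H}$; extend it to all of $[\N]^\N$ in a fixed Borel way (say by setting it equal to some fixed point $z_0 \in Z$ off $\mathcal{D}_x$) — call the extension $\widetilde\Phi_x$. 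The graph of $\widetilde\Phi_x$ as a subset of $[\N]^\N \times Z$ varies Borel-in-$x$: precisely, the set
\[
G = \{(x, y, z) \in [\N]^\N \times [\N]^\N \times Z : (y \le^\infty x \wedge z = \Phi(x,y)) \vee (y \not\le^\infty x \wedge z = z_0)\}
\]
is Borel in $P \times ([\N]^\N \times Z)$ with $P = [\N]^\N$, since $\Phi$ is Borel and $\mathcal{D}$ is Borel.

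\textbf{Key steps.} Next I would apply the third clause of Proposition~\ref{f:prel} with $P = [\N]^\N$ and $X = [\N]^\N \times Z$ (after identifying $\N^\N \times [\N]^\N$, the space named in the lemma, appropriately — note that $\mathcal{H}$ lives on a Polish space $Z$; here one harmlessly identifies $Z$ with $[\N]^\N$ or simply works with $\mb{BC}([\N]^\N \times Z)$ and then transports along the coding, since all uncountable Polish spaces are Borel isomorphic). This yields a Borel map $f : [\N]^\N \to \oom$ with $\operatorname{ran}(f) \subseteq \mb{BC}([\N]^\N \times Z)$ and $\mb{A}([\N]^\N \times Z)_{f(x)} = G_x = \graph(\widetilde\Phi_x)$ for every $x$. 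Since $\widetilde\Phi_x$ is a genuine function $[\N]^\N \to Z$ and it agrees with $\Phi_x$ on $\mathcal{D}_x$, the code $f(x)$ codes the graph of a Borel function which, restricted to $\mathcal{D}_x$, is the required homomorphism. One then just needs to remark that restricting a coded graph to $\mathcal{D}_x$ (or checking it on pairs in $R^{\mathcal{G}}_i \cap \mathcal{D}_x^{r_i}$) does not affect the homomorphism property, which is immediate from the hypothesis that $\Phi_x$ is a homomorphism $\restriction{\mathcal{G}}{\mathcal{D}_x} \to \mathcal{H}$.

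\textbf{Main obstacle.} The only genuinely delicate points are bookkeeping rather than mathematical: (i) making sure the Borel-isomorphism identification between $Z$ and $[\N]^\N$ (and between $\N^\N \times [\N]^\N$ and whatever product one writes the coding over) is handled cleanly so that ``code for the graph of a Borel homomorphism to $\mathcal{H}$'' still makes sense after transport — this is routine since $\mathcal{H}$ is a fixed Borel structure and Borel isomorphisms preserve all relevant Borel-ness; and (ii) verifying that the set $G$ above is genuinely Borel, which reduces to $\Phi$ being Borel on the Borel set $\mathcal{D}$ together with $\mathcal{D}$ itself being Borel. I expect the write-up to be short: define $G$, cite Proposition~\ref{f:prel}(3) for the parametrized code $f$, and observe the homomorphism property is inherited section-wise from the hypothesis on $\Phi$.
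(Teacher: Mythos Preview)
Your proposal is correct and follows exactly the paper's approach: the paper's entire proof is ``By our assumption, it suffices to show that there is a Borel map $f:\omm \to \oom$ so that $f(x)$ is a code for $\graph(\Phi_x)$. But this follows from Proposition~\ref{f:prel}.'' You have simply unpacked this two-sentence argument, making explicit the Borel set $G$ to which the third bullet of Proposition~\ref{f:prel} is applied and handling the bookkeeping (extension off $\mathcal{D}_x$, identification of $Z$ with $\N^\N$) that the paper leaves implicit.
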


\begin{proof}
   By our assumption, it suffices to show that there is a Borel map $f:\omm \to \oom$ so that $f(x)$ is a code for $graph(\Phi_x)$. But this follows from Proposition \ref{f:prel}.
\end{proof}

\begin{proof}[Proof of Theorem \ref{t:complexity}]
    W.l.o.g., we may assume that the underlying space of $\mc{H}$ is $\N^\N$. We check the applicability of Theorem \ref{t:maincomplex}, with $X=[\N]^\N$, $Y=\oom$, $\bbg=\mathbf{\Delta}^1_1$ and
\[\Psi(A)=\{c: c \in bgraph([\N]^\N,\N^\N) \land  \forall R_i \in L \ \forall x \in A^{r_i} \cap R_i^\mc{G} \ \forall y \in (\oom)^{\leq \N}\]
\[ ((\forall j \in \N ((x_j,y_j) \in \mb{A}_c)) \implies y \in R^\mc{H}_i)\}.\] 
	
	In other words, $\Psi(A)$ contains the Borel codes of the Borel homomorphisms from $\restriction{\mathcal{G}}{\mathcal{A}}$ to $\mathcal{H}$.
	
	 Let $A \subseteq \oom$ be analytic and take a closed set $F \subseteq \oom \times \omm$ so that $\proj_0(F)=A$. Let \[B=\{(s,y):(\forall x \leq^* y)(x \not \in F_s)\}.\]
	
	\begin{lemma}
		\label{l:needed}  
		\begin{enumerate}
			\item \label{c:b'Borel} $B \in \mathbf{\Pi}^0_2$.
			\item $\Psi$ is $\mathbf{\Pi}^1_1$ on $\mathbf{\Delta}^1_1$.
			\item \label{c:ufi} For any Borel set $C$ we have $C \in \mathcal{U}^\Psi$ if and only if $\restriction{\mathcal{G}'}{C}$ admits a Borel homomorphism to $\mathcal{H}$.   
		\end{enumerate}
	\end{lemma}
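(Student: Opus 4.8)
I would prove Lemma~\ref{l:needed} one clause at a time, since the three parts are logically independent and use different tools.

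\emph{Part (1).} The set $B=\{(s,y):(\forall x \le^* y)(x\notin F_s)\}$ should be rewritten so that the $\Pi^0_2$ structure is visible. The condition ``$\forall x\le^* y\ (x\notin F_s)$'' says: for every finite modification threshold $n$ and every $x$ which agrees with $y$ beyond $n$, we have $x\notin F_s$. Concretely, $(s,y)\in B$ iff for all $n$, for all $x\in\oom$ with $\restriction{x}{[n,\infty)}$ arbitrary and $\restriction{x}{n}$ arbitrary — wait, $\le^*$ for elements of $[\N]^\N$ identified with increasing enumerations means $x(k)\le y(k)$ for all large $k$; so $x\le^* y$ is a $\Sigma^0_2$ condition in $(x,y)$. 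Then $(s,y)\in B$ iff $\forall x\, (x\le^* y \implies x\notin F_s)$, a universal quantifier over the Polish space $\omm$ in front of a Borel (indeed $\Pi^0_2$, since $F$ is closed so $x\notin F_s$ is open in $(s,x)$, and $x\not\le^* y$ is $\Pi^0_2$) matrix. That only gives $\Pi^1_1$ a priori. To get $\Pi^0_2$ I would instead observe that $x\le^* y$ means $\exists n\,\forall k\ge n\ x(k)\le y(k)$, and I can replace the quantifier over all such $x$ by a quantifier over the \emph{pointwise-largest} relevant $x$: for fixed $n$, whether some $x$ with $x\le^* y$ witnessed at $n$ lands in the closed set $F_s$ is a closed condition because $\{x : \forall k\ge n\ x(k)\le y(k)\}$ is compact and $F_s$ is closed, so the image is closed in $s$; taking the union over $n$ and complementing gives the $\Pi^0_2$ (actually I should double-check the exact pointclass — the key point is that $\{x:\forall k\ge n\ x(k)\le y(k)\}\cap F_s\ne\emptyset$ is closed in $(s,y)$ for each fixed $n$ by compactness, hence $B$, being the complement of a countable union of closed sets, is $\Pi^0_2$).

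\emph{Part (2).} I need $\Psi$ to be $\Pi^1_1$ on $\mathbf{\Delta}^1_1$, i.e. for every Polish $P$ and $\mathbf{\Delta}^1_1$ set $\mathcal A\subseteq P\times[\N]^\N$, the set $\{(p,c): c\in\Psi(\mathcal A_p)\}$ is $\Pi^1_1$. Looking at the definition of $\Psi(\mathcal A)$, it is a conjunction of: (i) $c\in bgraph([\N]^\N,\N^\N)$, which is $\Pi^1_1$ by Proposition~\ref{pr:functions}; and (ii) the homomorphism clause $\forall R_i\forall x\in \mathcal A_p^{r_i}\cap R_i^{\mathcal G}\ \forall y\ (\ldots)$. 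In clause (ii) the quantifiers $\forall x$, $\forall y$ are universal over Polish spaces, and the matrix involves ``$x\in R_i^{\mathcal G}$'' (Borel), ``$x\in \mathcal A_p^{r_i}$'' ($\mathbf{\Delta}^1_1$, using that $\mathcal A$ is $\mathbf{\Delta}^1_1$ — this is where $\bbg=\mathbf{\Delta}^1_1$ is essential, since a $\Pi^1_1$ hypothesis inside a $\forall$ would break the bound), ``$(x_j,y_j)\in\mb A_c$'' (which is $\Sigma^1_1$ via $\mb A$, but it appears as a hypothesis of an implication, so it contributes $\Pi^1_1$ — again fine because $c$ will be restricted to codes, where $\mb A$ and $\mb C$ agree by Proposition~\ref{f:prel}, so I can replace $\mb A_c$ by the $\Pi^1_1$ set $\mb C_c$ on the relevant domain), and ``$y\in R_i^{\mathcal H}$'' (Borel). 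So the matrix is $\Pi^1_1$ on $\{c\in bgraph\}$, universal real quantifiers preserve $\Pi^1_1$, and a countable conjunction over $i$ preserves $\Pi^1_1$; hence the whole thing is $\Pi^1_1$. I would write this out carefully, flagging the $\mb A$-vs-$\mb C$ swap as the one subtle point.

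\emph{Part (3).} This is the part I expect to be the main obstacle, since it pins down exactly what $\mathcal U^\Psi$ is. Unwinding the definitions: $C\in\mathcal U^\Psi$ iff $\overline\Psi(C)=\{(s,c): c\in\Psi(C_s)\}$ has a full Borel uniformization, i.e. there is a Borel $s\mapsto c_s$ with $c_s$ a Borel code for a homomorphism from $\restriction{\mathcal G}{C_s}$ to $\mathcal H$, defined for \emph{all} $s\in\oom$. I want to show this is equivalent to $\restriction{\mathcal G'}{C}$ admitting a single Borel homomorphism to $\mathcal H$, where $\mathcal G'$ is the ``vertically stacked'' structure on $\oom\times[\N]^\N$. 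The forward direction: given a Borel homomorphism $\phi:\restriction{\mathcal G'}{C}\to\mathcal H$, for each $s$ the section $\phi(s,\cdot)$ restricted to $C_s$ is a homomorphism from $\restriction{\mathcal G}{C_s}$ (since $\mathcal G'$ only relates points in the same section and does so via $\mathcal G$) to $\mathcal H$, and by Proposition~\ref{f:prel}(3) applied to the Borel set $C$ and the function $\phi$ I get a Borel map $s\mapsto c_s$ of codes — so I need to massage the third bullet of Proposition~\ref{f:prel} to produce codes for graphs of the sections, which is routine. The reverse direction: given a Borel uniformization $s\mapsto c_s$, I need to assemble a single Borel function on $C\subseteq\oom\times[\N]^\N$ out of the fiberwise functions coded by the $c_s$; this is a standard ``Borel code $\to$ Borel set, uniformly'' argument — from $(s,x)\mapsto$ (the value at $x$ of the function coded by $c_s$), which is Borel because evaluating a uniformly-given Borel code at a point is Borel (using $\mb A$, and that $c_s\in bgraph$ so the relation is the graph of a total function). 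I then check this assembled map is a homomorphism $\restriction{\mathcal G'}{C}\to\mathcal H$: any $\mathcal G'$-relation among tuples in $C$ forces all the first coordinates equal to some $s$ and the second coordinates $\mathcal G$-related within $C_s$, and $c_s$ codes a $\mathcal G$-to-$\mathcal H$ homomorphism on $C_s$, so the images are $\mathcal H$-related. The one thing to be careful about: the uniformization is required to be \emph{total} on $\oom$, and correspondingly $\mathcal G'$ is defined on \emph{all} of $\oom\times[\N]^\N$ with $C$ a Borel subset, so the ``for all $s$'' matches the ``$\restriction{(\cdot)}{C}$'' with $C_s$ possibly empty for some $s$ (in which case the empty function is a valid homomorphism and $\Psi(C_s)$ is nonempty, consistent with fullness). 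Matching these domains precisely is the crux and the place where I would slow down.

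Putting the three parts together, $\mathcal F^\Psi$ is nicely $\Sigma^1_1$-hard on $\mathbf{\Delta}^1_1$: given analytic $A=\proj_0(F)$, the set $B$ of part (1) is $\mathbf{\Delta}^1_1$ (even $\Pi^0_2$), and one checks $B_s\ne\emptyset$ iff $s\notin A$ — so actually I'd use the complement, or rearrange so that $B_s\in\mathcal F^\Psi$ iff $s\in A$ — and the accompanying $D$ comes from Lemma~\ref{l:nondom} (which provides, for each $x\in B_s$, an actual homomorphism code, certifying $\Psi(B_s)\ne\emptyset$). Then Theorem~\ref{t:maincomplex} gives that $\mathcal U^\Psi$ is $\Sigma^1_2$-hard on $\mathbf{\Delta}^1_1$, and by part (3) $\mathcal U^\Psi$ is exactly the family of substructures of $\mathcal G'$ admitting a Borel homomorphism to $\mathcal H$; combined with the $\Sigma^1_2$ upper bound from Proposition~\ref{pr:homo}, this yields $\Sigma^1_2$-completeness, proving both assertions of Theorem~\ref{t:complexity}.
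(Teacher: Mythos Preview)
Your plan is correct. The paper itself does not give an argument for Lemma~\ref{l:needed}; it simply writes ``All these statements can be proved by using the argument in \cite[Lemma~4.6]{todorvcevic2021complexity}.'' So there is nothing to compare at the level of strategy: you have supplied a self-contained proof of what the paper outsources.

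A few comments on the details. In Part~(1) your compactness argument is exactly right once you note that for $x\in[\N]^\N$ the constraint $x(k)\le y(k)$ for $k\ge n$ forces $x(k)<x(n)\le y(n)$ for $k<n$ as well, so the fiber is genuinely compact; hence each $\{(s,y):\exists x\ (\forall k\ge n\ x(k)\le y(k))\land (s,x)\in F\}$ is closed and $B$ is $\Pi^0_2$. In Part~(2) the $\mb A$/$\mb C$ swap you flag is not actually needed: the clause $(x_j,y_j)\in\mb A_c$ sits in hypothesis position, and the negation of a $\Sigma^1_1$ condition is $\Pi^1_1$, which is already what you want. In Part~(3) your two directions are fine; for the direction ``homomorphism $\Rightarrow$ uniformization'' remember that $\Psi(C_s)$ consists of codes for \emph{total} functions $[\N]^\N\to\N^\N$, so you should first extend $\phi$ Borel-arbitrarily off $C$ before invoking Proposition~\ref{f:prel} to get the Borel map $s\mapsto c_s$.

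Your final paragraph (assembling $B$, $D$, and invoking Theorem~\ref{t:maincomplex}) is not part of Lemma~\ref{l:needed} at all; it belongs to the surrounding proof of Theorem~\ref{t:complexity}, which the paper carries out separately. The hesitation there (``$B_s\ne\emptyset$ iff $s\notin A$ --- so actually I'd use the complement'') is a confusion you should drop: the relevant dichotomy is $B_s\in\mathcal F^\Psi\iff s\in A$, and the paper verifies this directly from the inclusion $B_s\subseteq\mathcal D_{x'}$ when $x'\in F_s$ versus $B_s=[\N]^\N$ when $F_s=\emptyset$.
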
 
	\begin{proof}
		All these statements can be proved by using the argument in \cite[Lemma 4.6]{todorvcevic2021complexity}.
	\end{proof}
	Now define \[D=\{(s,c):s \in A \text{ and }(\exists x \in F_s) (f_{dom} (x)=c)\},\] where $f_{dom}$ is the function from \ref{l:nondom}. 
	
	We will show that $B$ and $D$ witness that $\mathcal{F}^\Psi$ is nicely $\mathbf{\Sigma}^1_1$-hard. The set $B$ is Borel by \eqref{c:b'Borel} of the lemma above, while by its definition $D$ is analytic. 
	
	Suppose that $s \in A$. Then for each $x' \in F_s$ we have 
	\[B_s= \{y:(\forall x \leq^* y)(x \not \in F_s)\} \subset \{y:y \leq^\infty x'\}=\mc{D}_{x'}.\]
	Thus, by \ref{l:nondom} $B_s \in \mathcal{F}^\Psi$ and $D_s \not = \emptyset$. Moreover, if $c \in D_s$ then for some $x \in F_s$ we have $f_{dom}(x)=c$, thus, $D_{s} \subseteq \Psi(B_s)$. 
	
	Conversely, if $s \not \in A$ then $F_s=D_s=\emptyset$ and $B_s=\omm$. Then $\mc{G}$ restricted to $B_s$ does not admit a Borel homomorphism to $\mc{H}$. Consequently, $\Psi(B_s)=\emptyset$. 
	
	So, Theorem \ref{t:maincomplex} is applicable and it yields a Borel set $C \subseteq \oom \times \oom \times \omm$ so that $\{s:C_s \in \mathcal{U}^\Psi\}$ is $\mb{\Sigma}^1_2$-hard. This implies the desired conclusion by \eqref{c:ufi} of the Lemma above and Proposition \ref{pr:homo}.
\end{proof}

\section{Further problems}

There are several further open problems in the theory of CBERs, which are sometimes considered ``equally hard" (see \cite[Section 17.4]{kechris2024theory}). It would be interesting to understand their relationship to each other and complexity. For instance, our argument seems to be very far from being able to achieve the following.

\begin{problem}
	Assume that there is a CBER that is measure-hyperfinite, but not hyperfinite. Do the hyperfinite CBERs form a $\mathbf{\Sigma}^1_2$-complete set?

\end{problem}

A possible way of summarizing Theorem \ref{t:complexitybb} that if one manages to come up with a CBER on $[\N]^\N$ that is not hyperfinite, but hyperfinite on every non-dominating set (uniformly), then the complexity result follows. A natural generalization is the following.

\begin{problem}
	Let $\mathcal{I}$ be an ideal of Borel sets on $\N^\N$. Is there a non-hyperfinite CBER $E$ such that for each $B \in \mc{I}$ the restriction $\restriction{E}{B}$ is hyperfinite? What if $\mc{I}$ is the ideal generated by compact sets?
\end{problem}

	\bibliographystyle{alpha}
	\bibliography{ref}

\end{document}